\newtheorem{theorem}{Theorem}[section]
\newtheorem{lemma}[theorem]{Lemma}
\newtheorem{proposition}[theorem]{Proposition}
\theoremstyle{definition}
\newtheorem{remark}[theorem]{Remark}
\theoremstyle{remark}
\renewcommand{\theclaim}{\textup{\theclaim}}
\numberwithin{equation}{section}
\def\openone
\newbox\ipbox
\newcommand{\ip}[2]{\left\langle #1\, , \,#2\right\rangle}
\newcommand{\diracb}[1]{\left\langle #1\mathrel{\mathchoice

{\setbox\ipbox=\hbox{$\displaystyle \left\langle\mathstrut
#1\right.$}

\vrule height\ht\ipbox width0.25pt depth\dp\ipbox}

{\setbox\ipbox=\hbox{$\textstyle \left\langle\mathstrut
#1\right.$}

\vrule height\ht\ipbox width0.25pt depth\dp\ipbox}

{\setbox\ipbox=\hbox{$\scriptstyle \left\langle\mathstrut
#1\right.$}

\vrule height\ht\ipbox width0.25pt depth\dp\ipbox}

{\setbox\ipbox=\hbox{$\scriptscriptstyle \left\langle\mathstrut
#1\right.$}

\vrule height\ht\ipbox width0.25pt depth\dp\ipbox}

}\right. }
\newcommand{\dirack}[1]{\left. \mathrel{\mathchoice

{\setbox\ipbox=\hbox{$\displaystyle \left.\mathstrut
#1\right\rangle$}

\vrule height\ht\ipbox width0.25pt depth\dp\ipbox}

{\setbox\ipbox=\hbox{$\textstyle \left.\mathstrut
#1\right\rangle$}

\vrule height\ht\ipbox width0.25pt depth\dp\ipbox}

{\setbox\ipbox=\hbox{$\scriptstyle \left.\mathstrut
#1\right\rangle$}

\vrule height\ht\ipbox width0.25pt depth\dp\ipbox}

{\setbox\ipbox=\hbox{$\scriptscriptstyle \left.\mathstrut
#1\right\rangle$}

\vrule height\ht\ipbox width0.25pt depth\dp\ipbox}

} #1\right\rangle}
\newcommand{\beq}{\begin{equation}}
\newcommand{\eeq}{\end{equation}}
\newcommand{\ora}{\overrightarrow}
\newcommand{\cj}[1]{\overline{#1}}
\newcommand{\M}{\mathcal{M}}
\newcommand{\br}{\mathbb{R}}
\newcommand{\bc}{\mathbb{C}}
\newcommand{\bn}{\mathbb{N}}
\def\blfootnote{\xdef\@thefnmark{}\@footnotetext}
\renewcommand{\mod}{\operatorname{mod}}
\newcommand{\norm}[1]{\lvert \lvert#1\rvert \lvert }
\def\-{^{-1}}
\def\ty{\emptyset}
\begin{document}

\title[Parseval frames of piecewise constant functions]{Parseval frames of piecewise constant functions}
\author{Dorin Ervin Dutkay}

\address{[Dorin Ervin Dutkay] University of Central Florida\\
	Department of Mathematics\\
	4000 Central Florida Blvd.\\
	P.O. Box 161364\\
	Orlando, FL 32816-1364\\
U.S.A.\\} \email{Dorin.Dutkay@ucf.edu}

\author{Rajitha Ranasinghe}

\address{[Rajitha Ranasinghe] University of Central Florida\\
	Department of Mathematics\\
	4000 Central Florida Blvd.\\
	P.O. Box 161364\\
	Orlando, FL 32816-1364\\
U.S.A.\\} \email{rajitha13@knights.ucf.edu }

\thanks{} 
\subjclass[2010]{41A30, 26A99}    
\keywords{Cuntz algebras, Parseval frame, dilation}

\begin{abstract}
We present a way to construct Parseval frames of piecewise constant functions for $L^2[0,1]$. The construction is similar to the generalized Walsh bases. It is based on iteration of operators that satisfy a Cuntz-type relation, but without the isometry property. We also show how the Parseval frame can be dilated to an orthonormal basis and the operators can be dilated to true Cuntz isometries. 
 
\end{abstract}
\maketitle \tableofcontents

\section{Introduction}
In \cite{DPS14}, Dutkay et al. introduced a method of constructing orthonormal bases from representations of Cuntz algebras. Recall that the {\it Cuntz algebra} $\mathcal O_N$, where $N$ is an integer, $N\geq 2$, is the $C^*$-algebra generated by $N$ isometries $(S_i)_{i=0,\dots, N-1}$ on some Hilbert space $\mathcal H$ which satisfy the {\it Cuntz relations} 
\begin{equation}
S_i^*S_j=\delta_{ij}I_{\mathcal H}, \quad (i,j\in\{0,\dots, N-1\}),\quad \sum_{i=0}^{N-1}S_iS_i^*=I_{\mathcal H}.
\label{eqcuntz}
\end{equation} 

The basic idea was to start with some vector $v_0$ in $\mathcal H$ which is fixed by the first isometry, $S_0v_0=v_0$, and then apply all the Cuntz isometries $S_{\omega_1}\dots S_{\omega_n}v_0$ where $\omega_1,\dots,\omega_n\in\{0,\dots,N-1\}$. Eliminating the repetitions generated by the fact that $S_0v_0=v_0$, one can see immediately that the resulting family of vectors is orthonormal. The more delicate issue is, of course, its completeness. 

A particular case of this construction yields the classical Walsh basis on $L^2[0,1]$ and some variations of that yield generalized Walsh bases for $L^2[0,1]$ consisting of piece-wise constant functions, see \cite{DPS14, DP14}. 

In this paper, we will follow similar ideas, but with some important modifications. We will begin not with a Cuntz algebra representation, but with one where only the relation 
\begin{equation}
\sum_{i=0}^{N-1}\tilde S_i\tilde S_i^*=I_{\mathcal H}
\label{eqhalf}
\end{equation}
is satisfied. Again we will have a vector $v_0$ (in our case, the constant function $\mathbf 1$) with $\tilde S_0v_0=v_0$, and, by iterating the operators $\tilde S_i$, we will obtain a family $\tilde S_{\omega_1}\dots \tilde S_{\omega_n}v_0$ which is a Parseval frame. 

Recall that a {\it Parseval frame} for a Hilbert space $\mathcal H$ is a family of vectors $\{\tilde e_j : j\in J\}$ such that 
\begin{equation}
\|f\|^2=\sum_{j\in J}|\ip{f}{\tilde e_j}|^2,\quad (f\in\mathcal H).
\label{eqparseval}
\end{equation}

To prove that the family $\{\tilde S_{\omega_1}\dots \tilde S_{\omega_n}v_0 :\omega_1,\dots,\omega_n\in\{0,\dots,N-1\}\}$ is a Parseval frame, we construct a dilation to an orthonormal basis. We recall two important results in dilation theory:

\begin{theorem}\label{th1.1}
A family $\{\tilde e_j : j\in J\}$ is a Parseval frame for a Hilbert space $\mathcal H$ if and only if there is a larger Hilbert space $\mathcal K\supset \mathcal H$ and an orthonormal basis $\{ e_j : j\in J\}$ such that $P_{\mathcal H} e_j=\tilde e_j$ for all $j\in J$, where $P_{\mathcal H}$ is the orthogonal projection from $\mathcal K$ onto the subspace $\mathcal H$.
\end{theorem}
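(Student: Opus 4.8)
The plan is to prove both implications directly from the defining identity \eqref{eqparseval}. For the harder "only if" direction, suppose $\{\tilde e_j : j\in J\}$ is a Parseval frame for $\mathcal H$. The standard device is to take the model space $\ell^2(J)$ with its canonical orthonormal basis $\{\delta_j : j\in J\}$, and to consider the analysis operator $\Theta\colon\mathcal H\to\ell^2(J)$ defined by $\Theta f=\sum_{j\in J}\ip{f}{\tilde e_j}\delta_j$. The Parseval identity \eqref{eqparseval} says precisely that $\Theta$ is an isometry, so $\Theta^*\Theta=I_{\mathcal H}$ and $Q:=\Theta\Theta^*$ is the orthogonal projection of $\ell^2(J)$ onto the closed subspace $\Theta(\mathcal H)$. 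First I would record that $\Theta^*\delta_j=\tilde e_j$, which follows by pairing against an arbitrary $f\in\mathcal H$: $\ip{\Theta^*\delta_j}{f}=\ip{\delta_j}{\Theta f}=\cj{\ip{f}{\tilde e_j}}$.

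Now identify $\mathcal H$ with its isometric image $\mathcal H':=\Theta(\mathcal H)\subseteq\ell^2(J)$, so that under this identification $\tilde e_j$ corresponds to $\Theta\tilde e_j$. It is tempting to set $\mathcal K=\ell^2(J)$ and $e_j=\delta_j$, but one must check that the orthogonal projection of $\delta_j$ onto $\mathcal H'$ equals $\Theta\tilde e_j$; indeed $P_{\mathcal H'}\delta_j=Q\delta_j=\Theta\Theta^*\delta_j=\Theta\tilde e_j$, which is exactly what is required. Thus $\mathcal K=\ell^2(J)$, the orthonormal basis $\{\delta_j\}$, and the embedding $\Theta$ witness the claim. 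The one subtlety to handle carefully is that the problem asks for $\mathcal K\supset\mathcal H$ with $\mathcal H$ a genuine subspace; this is a harmless matter of transporting the Hilbert space structure along the isometry $\Theta$, or equivalently of forming $\mathcal K=\mathcal H\oplus(\ell^2(J)\ominus\mathcal H')$ and checking the two summands reassemble $\ell^2(J)$ — I would state this as a routine identification rather than belabor it.

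For the "if" direction, suppose such a $\mathcal K\supset\mathcal H$ and orthonormal basis $\{e_j : j\in J\}$ of $\mathcal K$ exist with $P_{\mathcal H}e_j=\tilde e_j$. Fix $f\in\mathcal H$. Since $P_{\mathcal H}$ is self-adjoint and fixes $f$, we have $\ip{f}{\tilde e_j}=\ip{f}{P_{\mathcal H}e_j}=\ip{P_{\mathcal H}f}{e_j}=\ip{f}{e_j}$ for every $j$, and then Parseval's identity for the orthonormal basis $\{e_j\}$ in $\mathcal K$ gives
\begin{equation}
\sum_{j\in J}\abs{\ip{f}{\tilde e_j}}^2=\sum_{j\in J}\abs{\ip{f}{e_j}}^2=\norm{f}^2,
\end{equation}
which is \eqref{eqparseval}.

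The main obstacle, such as it is, lies entirely in the "only if" direction, and it is bookkeeping rather than mathematics: making the abstract isometric copy $\Theta(\mathcal H)$ into an honest superset relationship $\mathcal H\subset\mathcal K$ while keeping track of which vectors play the roles of $\tilde e_j$ and $e_j$. Once the analysis operator is in hand, every identity needed ($\Theta^*\Theta=I$, $\Theta^*\delta_j=\tilde e_j$, $P_{\mathcal H'}=\Theta\Theta^*$) is a one-line adjoint computation. I would present the construction via $\Theta$ and $\ell^2(J)$ as above, since it is the cleanest and is exactly the form later used in the paper when dilating the frame $\{\tilde S_{\omega_1}\cdots\tilde S_{\omega_n}v_0\}$ to an orthonormal basis.
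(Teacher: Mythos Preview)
Your proof is correct and is the standard Naimark--Han--Larson argument via the analysis operator $\Theta:\mathcal H\to\ell^2(J)$. Note, however, that the paper does \emph{not} actually prove Theorem~\ref{th1.1}: it is stated in the introduction as one of two ``important results in dilation theory'' that the authors recall without proof, so there is no in-paper argument to compare against.

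One small inaccuracy in your closing remark: you say this $\ell^2(J)$ construction ``is exactly the form later used in the paper when dilating the frame $\{\tilde S_{\omega_1}\cdots\tilde S_{\omega_n}v_0\}$.'' That is not quite right. The paper's proof of Theorem~\ref{main-thm} builds a \emph{concrete} dilation on $L^2([0,1]\times[0,1])$ by extending the isometry matrix $T$ to a unitary and constructing explicit Cuntz isometries $S_{(b,b')}$; the orthonormal basis is obtained by applying these isometries to $\mathbf 1$ and proving completeness via a Stone--Weierstrass argument. This is a structured, problem-specific dilation rather than the abstract $\ell^2(J)$ model, and indeed the point of the paper is that the dilation space and basis can be exhibited explicitly. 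Your $\ell^2(J)$ construction would of course also work to prove the Parseval property abstractly, but it is not what the paper does.
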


The second result \cite[Theorem 5.1]{BJKW00}, based on Popescu's dilation theory \cite{Pop89}, shows that the relation \eqref{eqhalf} can always be dilated to a representation of the Cuntz algebra.

\begin{theorem}\label{thj}
Let $\mathcal H$ be a Hilbert space and let $\tilde S_0,\dots, \tilde S_{N-1}$ be operators on $\mathcal H$ satisfying 
$$\sum_{i=0}^{N-1}\tilde S_i\tilde S_i^*=I_{\mathcal H}.$$
Then $\mathcal H$ can be embedded into a larger Hilbert space $\mathcal K$, carrying a representation $ S_0,\dots,  S_{N-1}$ of the Cuntz algebra $\mathcal O_N$ such that, if $P_{\mathcal H}$ is the projection onto $\mathcal H$, we have 
$$\tilde S_i^*= S_i^*P_{\mathcal H},$$
(i.e., $S_i^*\mathcal H\subset \mathcal H$ and $ S_i^*P_{\mathcal H}=P_{\mathcal H} S_i^*P_{\mathcal H}=\tilde S_i^*$) and $\mathcal H$ is cyclic for the representation. The system $$(\mathcal K,  S_0,\dots, S_{N-1}, P_{\mathcal H})$$ is unique up to unitary equivalence.  
\end{theorem}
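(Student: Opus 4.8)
The plan is to recast the statement as the dilation of a single isometry, after which the Cuntz structure and the uniqueness both fall out cleanly. The first step is a dictionary: a family $S_0,\dots,S_{N-1}$ on a Hilbert space $\mathcal K$ satisfies the Cuntz relations \eqref{eqcuntz} if and only if the single operator $W\colon\mathcal K\to\mathcal K\otimes\mathbb{C}^N$ given by $Wk=\sum_{i=0}^{N-1}S_i^*k\otimes e_i$ (with $e_0,\dots,e_{N-1}$ the standard basis of $\mathbb{C}^N$) is unitary; indeed a direct computation shows that the relations $\sum_iS_iS_i^*=I$ and $S_i^*S_j=\delta_{ij}I$ amount exactly to $W^*W=I$ and $WW^*=I$ respectively, and when $W$ is unitary one recovers $S_ik=W^*(k\otimes e_i)$. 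Under this dictionary the hypothesis $\sum_i\tilde S_i\tilde S_i^*=I_{\mathcal H}$ says precisely that $Vh:=\sum_i\tilde S_i^*h\otimes e_i$ defines an \emph{isometry} $V\colon\mathcal H\to\mathcal H\otimes\mathbb{C}^N$ (since $V^*V=\sum_i\tilde S_i\tilde S_i^*=I_{\mathcal H}$), and the desired conclusion ``$S_i^*\mathcal H\subseteq\mathcal H$, $\ S_i^*|_{\mathcal H}=\tilde S_i^*$'' becomes the single requirement that $\mathcal H\subseteq\mathcal K$, $W\mathcal H\subseteq\mathcal H\otimes\mathbb{C}^N$, and $W|_{\mathcal H}=V$. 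So it suffices to dilate the isometry $V$ to a unitary $W$ for which $\mathcal H$ is cyclic under $\{S_i,S_i^*\}$.

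For existence I would take the defect space $\mathcal E:=(\mathcal H\otimes\mathbb{C}^N)\ominus V\mathcal H$ and form the $\mathcal E$-valued full Fock space glued to $\mathcal H$,
\[
\mathcal K:=\mathcal H\ \oplus\ \bigoplus_{k\geq0}\mathcal E\otimes(\mathbb{C}^N)^{\otimes k},
\]
with $\mathcal H$ identified with the first summand. Using $\mathcal H\otimes\mathbb{C}^N=V\mathcal H\oplus\mathcal E$ together with the canonical identifications $\big(\mathcal E\otimes(\mathbb{C}^N)^{\otimes k}\big)\otimes\mathbb{C}^N=\mathcal E\otimes(\mathbb{C}^N)^{\otimes(k+1)}$, one gets an identification $\mathcal K\otimes\mathbb{C}^N\cong V\mathcal H\oplus\bigoplus_{k\geq0}\mathcal E\otimes(\mathbb{C}^N)^{\otimes k}$, and one defines $W$ to be $V$ on $\mathcal H$ and the identity on the Fock part; this $W$ is visibly unitary with $W|_{\mathcal H}=V$. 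Translating back through the dictionary yields operators $S_0,\dots,S_{N-1}$ on $\mathcal K$ obeying \eqref{eqcuntz}, with $S_i^*\mathcal H\subseteq\mathcal H$ and $S_i^*|_{\mathcal H}=\tilde S_i^*$, i.e. $S_i^*P_{\mathcal H}=P_{\mathcal H}S_i^*P_{\mathcal H}=\tilde S_i^*$. Cyclicity of $\mathcal H$ is then checked directly: already the vectors $S_jv$ ($v\in\mathcal H$) have $\mathcal E$-components running over the range of $I-VV^*$, a total subset of $\mathcal E$, and further applications of the $S_j$ act as the creation operators on the Fock part and fill each $\mathcal E\otimes(\mathbb{C}^N)^{\otimes k}$, so $\Span\{S_{\omega_1}\cdots S_{\omega_n}v:\ v\in\mathcal H\}=\mathcal K$. (Should some construction instead produce a non-cyclic $\mathcal H$, one simply restricts to the smallest subspace containing $\mathcal H$ and reducing all $S_i$; the Cuntz relations persist under the restriction.)

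For uniqueness, let $(\mathcal K,S_i,P_{\mathcal H})$ and $(\mathcal K',S_i',P'_{\mathcal H})$ both satisfy the conclusion, and abbreviate $S_\omega=S_{\omega_1}\cdots S_{\omega_n}$ for a finite word $\omega$. Because each $S_i^*$ leaves $\mathcal H$ invariant and restricts to $\tilde S_i^*$ there, and $\mathcal H$ is cyclic, the linear span of $\{S_\omega v:\ \omega\ \text{a finite word},\ v\in\mathcal H\}$ is dense in $\mathcal K$: it is obviously invariant under each $S_i$, and invariant under each $S_i^*$ because $S_i^*S_{\omega_1}=\delta_{i,\omega_1}I$ collapses such a product while $S_i^*v=\tilde S_i^*v\in\mathcal H$ at the bottom. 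One sets $U(S_\omega v):=S_\omega' v$; this is well defined and isometric because every inner product $\ip{S_\omega v}{S_\tau w}=\ip{v}{S_\omega^*S_\tau w}$ reduces, using only $S_i^*S_j=\delta_{ij}I$ and $S_i^*|_{\mathcal H}=\tilde S_i^*$, either to $0$ (when neither of $\omega,\tau$ is a prefix of the other) or to an inner product of the shape $\ip{v}{\tilde S_{u_m}^*\cdots\tilde S_{u_1}^*w}$ (or the same with $v,w$ interchanged) when $\tau u=\omega$ (or $\omega u=\tau$) — an expression living entirely inside $\mathcal H$ and built only from the $\tilde S_i$ — and the identical reduction is valid in $\mathcal K'$. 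Hence $U$ extends to an isometry of $\mathcal K$ onto a dense subspace of $\mathcal K'$, so to a unitary; it fixes $\mathcal H$ pointwise and satisfies $US_i=S_i'U$, which is the asserted unitary equivalence.

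The one step needing genuine care is the middle one: verifying that the constructed $W$ is \emph{onto}, equivalently that $\sum_iS_iS_i^*=I$ rather than merely $\leq I$. This is exactly where the hypothesis $\sum_i\tilde S_i\tilde S_i^*=I_{\mathcal H}$ is used, and where the ambient Fock tower $\mathcal K$ has to be taken neither too small nor too large; the accompanying bookkeeping of the identification $\mathcal K\otimes\mathbb{C}^N\cong\cdots$ is the main technical nuisance. Everything after that — the Cuntz relations for the $S_i$, cyclicity, and the whole uniqueness argument — is routine. In substance this is a streamlined form of the Popescu-type dilation behind \cite[Theorem 5.1]{BJKW00}, \cite{Pop89}.
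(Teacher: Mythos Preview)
The paper does not give its own proof of this theorem: it is quoted verbatim from \cite[Theorem~5.1]{BJKW00} (building on Popescu's dilation theory \cite{Pop89}) as background, and is never proved in the paper. The only dilation the paper actually constructs is the concrete one in the proof of Theorem~\ref{main-thm} and Remark~\ref{rem2.0}, which is a special instance, not a proof of the general statement. So there is nothing in the paper to compare your argument against.

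That said, your proof is correct and is, in substance, exactly the Popescu minimal isometric dilation that the paper is citing. The dictionary ``Cuntz family on $\mathcal K$ $\Leftrightarrow$ unitary $W:\mathcal K\to\mathcal K\otimes\mathbb C^N$'' is valid; the Fock-type space $\mathcal K=\mathcal H\oplus\bigoplus_{k\ge0}\mathcal E\otimes(\mathbb C^N)^{\otimes k}$ with $\mathcal E=(\mathcal H\otimes\mathbb C^N)\ominus V\mathcal H$ does make $W$ unitary via the identification $\mathcal K\otimes\mathbb C^N\cong V\mathcal H\oplus\bigoplus_{k\ge0}\mathcal E\otimes(\mathbb C^N)^{\otimes k}$; and your uniqueness argument (reducing $\ip{S_\omega v}{S_\tau w}$ to data in $\mathcal H$ via the Cuntz relations and $S_i^*|_{\mathcal H}=\tilde S_i^*$) is the standard one. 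One small point worth making explicit in the cyclicity check: to see that $\mathcal E$ lies in the closed span of $\{S_\omega v\}$, you use that $S_jv-\tilde S_jv$ is exactly the $\mathcal E$-component $(I-VV^*)(v\otimes e_j)$, and $\tilde S_jv\in\mathcal H$ is already in the span, so you may subtract it off; after that, the creation-operator action of $S_j$ on the Fock part fills the higher levels by induction. With that clarification the argument is complete.
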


These are the general lines of our construction. Now we describe the particulars of our construction.

We start with a matrix $T$ of the form 
\begin{equation}\label{matrix_T0}
    T :=\frac{1}{\sqrt{N}} \left( \alpha_{i,j} \right)_{\substack{i=0, \dots , M-1 \\ j=0, \dots , N-1}},
\end{equation}
such that 

\begin{equation}\label{matrix_T}
    T^*T=I_N,
\end{equation}
i.e., an isometry. This means that the columns are orthonormal vectors in $\bc^M$, and, equivalently, that the rows form a Parseval frame for $\bc^N$ (see, e.g., \cite[Lemma 3.8]{HKLW07}).

We assume that
\begin{equation}\label{alpha-zero}
\alpha_{0,j}=1~~\textnormal{for}~j \in \{ 0, \dots , N-1  \}.    
\end{equation}
i.e., the first row of $T$ is $1/\sqrt{N}$. (This is required for the relation $S_0v_0=v_0$).

Next we build the piecewise constant functions
$$m_{i}(x)=\sum_{j=0}^{N-1} \alpha_{i,j} \chi_{[j/{N}~,~{(j+1)}/{N}]},~~ i \in \{ 0, \dots , M-1 \},$$
where $\chi_A$ denotes the characteristic function of the subset $A$.

Using these functions, we define the operators:
\begin{equation}\label{cuntz}
    \left(\Tilde{S}_{i} f \right)(x) := m_{i}(x)f(Nx\mod{1}),~~\textnormal{on}~L^{2}[0,1],
\end{equation}
with $\Tilde{S}_{0}\mathbf{1}=\mathbf 1,$ where $\mathbf{1}$ denote the constant function.

\begin{proposition}\label{pr1.1}
The operators $\tilde S_0,\dots, \tilde S_{M-1}$ satisfy the relation 

\begin{equation}
\sum_{i=0}^{M-1}\tilde S_i\tilde S_i^*=I_{L^2[0,1]}.
\label{eq1.1.10}
\end{equation}

\end{proposition}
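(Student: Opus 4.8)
The plan is to compute the adjoint $\tilde S_i^*$ explicitly as an integral/averaging operator and then sum $\tilde S_i\tilde S_i^*$ directly, using the Parseval-frame property of the rows of $T$ (equivalently $T^*T = I_N$). First I would unwind the definition: for $f,g\in L^2[0,1]$,
\[
\ip{\tilde S_i f}{g}=\int_0^1 m_i(x)\,f(Nx\bmod 1)\,\overline{g(x)}\,dx,
\]
and by splitting $[0,1]$ into the $N$ intervals $I_j=[j/N,(j+1)/N]$ and substituting $x=(y+j)/N$ on each, one gets
\[
\ip{\tilde S_i f}{g}=\frac1N\sum_{j=0}^{N-1}\alpha_{i,j}\int_0^1 f(y)\,\overline{g\!\left(\tfrac{y+j}{N}\right)}\,dy,
\]
so that
\[
(\tilde S_i^* g)(y)=\frac1N\sum_{j=0}^{N-1}\overline{\alpha_{i,j}}\,g\!\left(\tfrac{y+j}{N}\right).
\]
(Here I am using that $m_i$ is constant, equal to $\alpha_{i,j}$, on $I_j$; the endpoints form a null set so there is no ambiguity.)

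Next I would apply $\tilde S_i$ to this: for $x\in I_k$, $Nx\bmod 1\in[0,1]$ and $\tilde S_i(\tilde S_i^* g)(x)=m_i(x)\,(\tilde S_i^*g)(Nx\bmod 1)=\alpha_{i,k}\cdot\frac1N\sum_{j}\overline{\alpha_{i,j}}\,g\!\big(\tfrac{(Nx\bmod 1)+j}{N}\big)$. Now I would sum over $i$ from $0$ to $M-1$: the coefficient of the term $g\big(\tfrac{(Nx\bmod 1)+j}{N}\big)$ becomes $\frac1N\sum_{i=0}^{M-1}\alpha_{i,k}\overline{\alpha_{i,j}}$, which is exactly the $(k,j)$ entry of $N\,T^*T = N I_N$ divided by $N$, hence equals $\delta_{kj}$. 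So only the $j=k$ term survives, giving $\sum_i \tilde S_i\tilde S_i^* g(x) = g\big(\tfrac{(Nx\bmod 1)+k}{N}\big)$ for $x\in I_k$; but for $x\in I_k$ we have $\tfrac{(Nx\bmod 1)+k}{N}=x$, so this is just $g(x)$. That proves \eqref{eq1.1.10}.

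The only genuinely delicate point is getting the adjoint formula and the change of variables right — in particular keeping track of the conjugates on $\alpha_{i,j}$ and confirming that the inner double-sum collapses via $T^*T=I_N$ rather than $TT^*=I_M$ (the latter is false in general when $M>N$). Everything else is bookkeeping on the intervals $I_j$. I would also remark that the identity $\frac1N\sum_{i=0}^{M-1}\alpha_{i,k}\overline{\alpha_{i,j}}=\delta_{kj}$ is precisely the statement that the columns of $T$ are orthonormal in $\bc^M$, i.e.\ the content of \eqref{matrix_T}, which is the hypothesis we are consuming; the assumption \eqref{alpha-zero} on the first row is not needed here (it is only used to ensure $\tilde S_0\mathbf 1=\mathbf 1$).
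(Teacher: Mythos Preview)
Your proof is correct and follows essentially the same route as the paper: compute $\tilde S_i^*$ by the change of variables $x=(y+j)/N$ on each interval $I_j$, then sum $\tilde S_i\tilde S_i^*$ and collapse the double sum using the column-orthonormality $\frac1N\sum_i\alpha_{i,k}\overline{\alpha_{i,j}}=\delta_{kj}$ coming from $T^*T=I_N$. Your added remarks that \eqref{alpha-zero} is not used here and that one must invoke $T^*T=I_N$ rather than $TT^*$ are accurate and worth keeping; the only cosmetic slip is that $\frac1N\sum_i\alpha_{i,k}\overline{\alpha_{i,j}}$ is the $(j,k)$ entry of $T^*T$ rather than the $(k,j)$ entry, but since $T^*T=I_N$ is Hermitian this does not affect the conclusion.
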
 

Define $\Omega_\M$ to be the set of all words $\omega_1\dots\omega_{n}$ with digits in $\{0,\dots, M-1\}$, that do not end in $0$, and the empty word $\ty$. (We want the word not to end in 0, to eliminate the repetitions coming from the relation $\tilde S_{\omega_1}\dots\tilde S_{\omega_n}S_0\mathbf 1=\tilde S_{\omega_1}\dots\tilde S_{\omega_n}\mathbf 1$).

\begin{theorem}\label{main-thm} The family of functions
\begin{equation}
    \{ \tilde{S}_{\omega_1}\dots \tilde S_{\omega_n}\mathbf{1} : \omega_1\dots\omega_n \in \Omega_M  \}
\end{equation}
is a Parseval frame in $L^2 [0,1]$.
\end{theorem}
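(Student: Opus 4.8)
The plan is to use the dilation machinery provided by Theorems~\ref{th1.1} and \ref{thj}. By Theorem~\ref{thj}, embed $L^2[0,1]$ into a larger Hilbert space $\mathcal K$ carrying a genuine Cuntz representation $S_0,\dots,S_{M-1}$ of $\mathcal O_M$ with $\tilde S_i^* = S_i^* P_{\mathcal H}$, $P_{\mathcal H}$ the projection onto $\mathcal H := L^2[0,1]$, and with $\mathcal H$ cyclic. The key point is that since $\tilde S_0 \mathbf 1 = \mathbf 1$, and $S_0^* P_{\mathcal H} = P_{\mathcal H} S_0^* P_{\mathcal H} = \tilde S_0^*$, one checks that $\langle S_0 \mathbf 1 - \mathbf 1, \xi\rangle = 0$ for all $\xi$ in a dense set, so $S_0 \mathbf 1 = \mathbf 1$; that is, $\mathbf 1$ is fixed by the true isometry $S_0$ as well. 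Then I would run the standard Cuntz-basis argument of \cite{DPS14}: the family $\{S_{\omega_1}\cdots S_{\omega_n}\mathbf 1 : \omega_1\cdots\omega_n \in \Omega_M\}$ is orthonormal in $\mathcal K$, because the Cuntz relations give $S_i^* S_j = \delta_{ij}$ (so words that are not prefixes of one another give orthogonal vectors, and the "no trailing $0$" condition removes exactly the coincidences forced by $S_0\mathbf 1 = \mathbf 1$), and each vector has norm $\|\mathbf 1\| = 1$.

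Next I would show this orthonormal family is in fact an orthonormal \emph{basis} of $\mathcal K$ — or, more precisely, of the cyclic subspace it generates, which by Theorem~\ref{thj} is all of $\mathcal K$ since $\mathcal H$ is cyclic and $\mathcal H$ itself should be shown to lie in the closed span. For this, let $\mathcal K_0$ be the closed linear span of $\{S_{\omega_1}\cdots S_{\omega_n}\mathbf 1\}$. One shows $\mathcal K_0$ is invariant under each $S_i$ (clear) and under each $S_i^*$: using the Cuntz relations, $S_i^*(S_{\omega_1}\cdots S_{\omega_n}\mathbf 1)$ is either $0$, or $S_{\omega_2}\cdots S_{\omega_n}\mathbf 1$ (when $\omega_1 = i$), or — in the boundary case where the word becomes empty or the fixed-point relation intervenes — still a vector of the same form; in every case it lands in $\mathcal K_0$. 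Hence $\mathcal K_0$ reduces the representation and contains $\mathbf 1$, so it contains the cyclic subspace generated by $\mathbf 1$. The remaining point is that $\mathbf 1$ is cyclic, equivalently that $L^2[0,1] = \mathcal H \subseteq \mathcal K_0$: this follows because the functions $\tilde S_{\omega_1}\cdots\tilde S_{\omega_n}\mathbf 1 = m_{\omega_1}(x)\, m_{\omega_2}(Nx)\cdots$ are exactly piecewise-constant functions on the dyadic-type partitions of $[0,1]$ into $N^n$ equal subintervals, and — using $T^*T = I_N$, i.e.\ that the rows of $T$ form a Parseval frame for $\bc^N$ — these span (in the Parseval-frame sense) all functions constant on the level-$n$ partition; letting $n\to\infty$ gives density in $L^2[0,1]$. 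Combined with $\mathcal H$ cyclic in $\mathcal K$, we get $\mathcal K_0 = \mathcal K$ and so $\{S_{\omega_1}\cdots S_{\omega_n}\mathbf 1\}$ is an orthonormal basis of $\mathcal K$.

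Finally I would apply Theorem~\ref{th1.1} in the reverse direction: projecting the orthonormal basis $\{S_{\omega_1}\cdots S_{\omega_n}\mathbf 1 : \omega_1\cdots\omega_n\in\Omega_M\}$ of $\mathcal K$ onto $\mathcal H = L^2[0,1]$ yields a Parseval frame. It remains only to identify $P_{\mathcal H}\, S_{\omega_1}\cdots S_{\omega_n}\mathbf 1$ with $\tilde S_{\omega_1}\cdots\tilde S_{\omega_n}\mathbf 1$. This is where the adjoint relation $S_i^* P_{\mathcal H} = P_{\mathcal H} S_i^* P_{\mathcal H}$ from Theorem~\ref{thj} is used: since $S_i^*\mathcal H \subseteq \mathcal H$, the subspace $\mathcal H$ is $S_i^*$-invariant, hence $P_{\mathcal H} S_i = P_{\mathcal H} S_i P_{\mathcal H}$, and therefore $P_{\mathcal H} S_{\omega_1}\cdots S_{\omega_n}\mathbf 1 = (P_{\mathcal H} S_{\omega_1} P_{\mathcal H})\cdots(P_{\mathcal H}S_{\omega_n}P_{\mathcal H})\mathbf 1 = \tilde S_{\omega_1}\cdots\tilde S_{\omega_n}\mathbf 1$, because $P_{\mathcal H}S_i P_{\mathcal H} = \tilde S_i$ (the adjoint of $\tilde S_i^* = S_i^* P_{\mathcal H} = P_{\mathcal H}S_i^* P_{\mathcal H}$, restricted to $\mathcal H$) and $\mathbf 1 \in \mathcal H$. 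This gives exactly the claimed Parseval frame.

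\textbf{Main obstacle.} The delicate step is showing that $\mathbf 1$ is cyclic for the dilated representation, equivalently that $L^2[0,1]$ is contained in the closed span of the iterates — i.e.\ the completeness/spanning argument for the piecewise-constant functions on $[0,1]$. Theorem~\ref{thj} guarantees $\mathcal H$ is cyclic in $\mathcal K$, so it suffices to show $\mathbf 1$ generates $\mathcal H$ under the $\tilde S_i$'s (and their adjoints, which is automatic once we work inside $\mathcal H$); this reduces to the linear-algebra fact that the rows of $T$, forming a Parseval frame for $\bc^N$, let the level-$n$ iterates reconstruct every step function constant on the $N^n$-equal-interval partition, followed by a density argument as $n\to\infty$. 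One must be careful that the correct statement is density of the \emph{span}, not that the iterates themselves form a frame at each finite level in isolation — but the telescoping/refinement structure of the partitions handles this cleanly.
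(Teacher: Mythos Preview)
Your outline is close to a valid proof, but there is a genuine gap at the step where you pass from ``the span of $\{\tilde S_\omega\mathbf 1\}$ is dense in $\mathcal H$'' to ``$\mathcal H\subseteq\mathcal K_0$''. The vectors $\tilde S_\omega\mathbf 1$ are not elements of $\mathcal K_0$; they are the $P_{\mathcal H}$-projections of the elements $S_\omega\mathbf 1\in\mathcal K_0$. Density of a set of projections in $\mathcal H$ says nothing about $\mathcal H$ lying inside $\mathcal K_0$. The gap can be closed, but only by using the very fact you are trying to prove at the finite level: since $\{S_\omega\mathbf 1:\omega\in\Omega_M\}$ is an orthonormal basis of $\mathcal K_0$, one has for $h\in\mathcal F_n$
\[
\|P_{\mathcal K_0}h\|^2=\sum_{\omega\in\Omega_M}|\langle h,S_\omega\mathbf 1\rangle|^2=\sum_{\omega\in\Omega_M}|\langle h,\tilde S_\omega\mathbf 1\rangle|^2\ge\sum_{|\omega|\le n}|\langle h,\tilde S_\omega\mathbf 1\rangle|^2=\|h\|^2,
\]
the last equality being the level-$n$ Parseval identity (rows of $T^{\otimes n}$ form a Parseval frame for $\bc^{N^n}$). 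This forces $h\in\mathcal K_0$, hence $\mathcal H\subseteq\mathcal K_0$. But notice what has happened: the level-$n$ Parseval property is doing all the work, and once you have it, the theorem follows \emph{directly} by density of $\bigcup_n\mathcal F_n$ (this is exactly the paper's alternative proof in Remark~\ref{rem2.1}). The abstract dilation from Theorem~\ref{thj} contributes nothing beyond Bessel's inequality, which you do not need anyway.

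This is the key contrast with the paper's main proof. The paper does not invoke the abstract dilation of Theorem~\ref{thj}; instead it builds an \emph{explicit} dilation on $L^2([0,1]\times[0,1])$ by first extending the $M\times N$ isometry $T$ to an $NN'\times NN'$ unitary with a special averaging property \eqref{eqp1.2}, and then defining concrete Cuntz isometries $S_{(b,b')}$ of composition type. Completeness of the orthonormal family $\{S_\omega\mathbf 1\}$ in the dilation space is proved by an independent analytic argument (the function $h(t,t')=\|P_{\mathcal K}e_{(t,t')}\|^2$ satisfies $h(t,t')=h(t/N,t'/N')$ and $h(0,0)=1$, hence $h\equiv 1$ by continuity), with no appeal to the finite-level Parseval property. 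The projection identity $P_V S_\omega\mathbf 1=\tilde S_{\tilde\omega}\mathbf 1$ is then verified by a direct computation using \eqref{eqp1.2}. So the paper's dilation genuinely earns its keep: it gives completeness in $\mathcal K$ without first knowing anything Parseval-like in $\mathcal H$. Your route, by contrast, is essentially the direct argument of Remark~\ref{rem2.1} dressed up in dilation language; it is correct once the gap is patched, but the dilation is then ornamental.
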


We will start section 2 with the proof of our main result. The proof has the advantage that it shows also how the Parseval frame can be dilated to an orthonormal basis and how the operators $\tilde S_\omega$ are dilated to Cuntz isometries, as in Theorems \ref{th1.1} and \ref{thj}. It has also the advantage that it goes along the more general lines presented in \cite{DPS14, PW17, DR17,DR18}. In Proposition \ref{prop-iterate-parseval-orthogonal}, we present some more properties of the Parseval frames constructed in Theorem \ref{main-thm}, with explicit ways of computing these piecewise constant functions by means of tensor products of matrices. In Remark \ref{rem2.1}, we present a more direct proof of Theorem \ref{main-thm}, without the use of dilation theory. We end the paper with Proposition \ref{pr2.5}, which shows how one can construct examples of matrices satisfying \eqref{matrix_T} and \eqref{alpha-zero}.

The construction of Parseval frames, using operators that satisfy \eqref{eq1.1.10}, is possible in a more general context, but we defer this to a later paper. 

\section{Proofs and other results}

\begin{proof}[Proof of Proposition \ref{pr1.1}]
We compute $\tilde S_l^*$. We have, for $f,g\in L^2[0,1]$,
$$\ip{\tilde S_l f}{g}=\int_{[0,1]}m_l(x)f(Nx\mod 1)\cj g(x)\,dx=\frac{1}{N}\sum_{b=0}^{N-1}\int_{[0,1]}m_l\left(\frac{x+b}{N}\right)f(x)\cj g\left(\frac{x+b}{N}\right)\,dx.$$
Thus 
\begin{equation}
\tilde S_l^* g(x)=\frac{1}{N}\sum_{b=0}^{N-1}\cj m_l\left(\frac{x+b}{N}\right)g\left(\frac{x+b}{N}\right)=\frac{1}{N}\sum_{b=0}^{N-1}\cj \alpha_{l,b}g\left(\frac{x+b}{N}\right).
\label{eq1.1.1}
\end{equation}
So, if $x\in\left[\frac{b'}{N},\frac{b'+1}{N}\right)$, and $g\in L^2[0,1]$, then 
$$\sum_{l=0}^{M-1}\tilde S_l\tilde S_l^*g(x)=\sum_{l=0}^{M-1}m_l(x)\frac{1}{N}\sum_{b=0}^{N-1}\cj\alpha_{l,b}g\left(\frac{(Nx\mod 1)+b}{N}\right)$$$$=
\sum_{b=0}^{N-1}g\left(x+\frac{-b'+b}{N}\right)\frac1N\sum_{l=0}^{M-1}\alpha_{l,b'}\cj\alpha_{l,b}=\sum_{b=0}^{N-1}g\left(x+\frac{-b'+b}{N}\right)\delta_{b,b'}=g(x).$$
\end{proof}

\begin{proof}[Proof of Theorem \ref{main-thm}]

First we will dilate the isometry matrix $T$ to a unitary in a special way. 

Pick a number $N'$ such that $NN'\geq M$. Denote $B=\{ 0 , \dots , N-1 \},~B'=\{ 0, 1, \dots , N'-1  \}$,  and $L = \{ 0 , \dots , M-1 \}$. 

We can identify $L$ with a subset $L'$ of $B \times B'$ by some injective function $\iota:L\rightarrow B\times B'$, in such a way that $0$ from $L$ corresponds to $(0,0)=\iota(0)$ from $B \times B',$ and let $\alpha_{(b,b'), c}=\alpha_{l,c}$ if $(b,b')=\iota(l)$, $\alpha_{(b,b'),c}=0$ if $(b,b')\not\in \iota(L)$. In other words we add some zero rows to the matrix $T=\frac{1}{\sqrt N}(\alpha_{ij})$ to get $NN'$ rows in total. 

The next step consists of dilating the Parseval frame of row vectors for the matrix $T$ to an orthonormal basis, in a way that is compatible with the Cartesian product structure of $B\times B'$.

We construct the numbers $a_{(b,b'),(c,c')}$, $(b,b'),(c,c')\in B\times B'$ with the following properties:
\begin{enumerate}
	\item The matrix 
	\begin{equation}
	\frac{1}{\sqrt{NN'}}\left(a_{(b,b'),(c,c')} \right)_{(b,b'),(c,c')\in B\times B'}
	\label{eqp1.1}
	\end{equation}
	is unitary and the first row is constant $\frac{1}{\sqrt{NN'}}$ so $a_{(0,0),(c,c')}=1$ for all $(c,c')\in B\times B'$,
	\item For all $(b,b')\in B\times B'$, $c\in B$
	\begin{equation}
	\frac{1}{N'}\sum_{c'\in B'}a_{(b,b'),(c,c')}=\alpha_{(b,b'),c}\quad( (b,b')\in B\times B', c\in B).
	\label{eqp1.2}
	\end{equation}
\end{enumerate}
Let $t_{(b,b^{\prime}),c}=\frac{1}{\sqrt{N}} \alpha_{(b,b^{\prime}), c}.$ Note that the vectors $t_{\cdot, c}$, $c\in B$, in $\mathbb{C}^{NN^{\prime}}$ are orthonormal. Therefore, we can complete it to an orthonormal basis in $\bc^{NN'}$, so we can define some vectors $t_{\cdot , d}, \ d \in \{ 1, . . . , NN^{\prime}-N \}$ such that   
$$\{ t_{\cdot , c} : c \in B \} \cup \{ t_{\cdot , d} : d \in \{ 1, . . . , NN^{\prime}-N \} \}$$
is an orthonormal basis for $\mathbb{C}^{NN^{\prime}}.$

For $c \in B$, define the vectors in $\mathbb{C}^{NN^{\prime}}$ by 
$$\tilde e_{c}(c_1, {c^{\prime}_1})=\frac{1}{\sqrt{N^{\prime}}} \delta_{cc_1} \ \ \  \left( (c_1, {c^{\prime}_1}) \in B \times B^{\prime}  \right).$$  

It is easy to see that these vectors are orthonormal in $\mathbb{C}^{NN'}$, therefore we can complete them to an orthonormal basis for $\mathbb{C}^{NN'}$ with some vectors $\tilde e_d~,~ d \in \{ 1, . . . , NN'-N \}$. 

Note that the vectors $\{\tilde e_c : c \in B  \}$ span the subspace
$$\mathcal{M}=\{ \left( X{(c,c^{\prime})} \right)_{(c,c^{\prime}) \in B \times B^{\prime} } : \textnormal{$X$ does not depend on} \ c^{\prime}   \}.$$

Define now 
$$ {s}_{(b,b^{\prime})}=\sum\limits_{c \in B} t_{(b,b^{\prime}), c} \tilde e_{c}+\sum\limits_{d=1}^{NN^{\prime}-N} t_{(b,b^{\prime}), d}\tilde e_{d}.$$
Since the matrix with columns $t_{\cdot, c}$  and $t_{\cdot, d}$ has orthonormal columns, it is unitary. So it has orthogonal rows. So the vectors $t_{(b,b^{\prime}), \cdot}$ are orthonormal, therefore the vectors ${s}_{(b,b')}$ are orthonormal. Also, since  $\alpha_{(0,0),c}=1$,  we have that $t_{(0,0),c}=\frac{1}{\sqrt{N}}$  for all $c \in B.$ But then 
$$\sum\limits_{c \in B} |t_{(0,0),c}|^2 =1=\| t_{(0,0),\cdot} \|^2.$$
So $t_{(0,0),d}=0$   for $d \in \{ 1, . . . , NN'-N \}.$ Therefore, for all $(c_1, {c^{\prime}_1}) \in B \times B':$
$${s}_{(0,0)}(c_1, {c^{\prime}_1})=\sum\limits_{c \in B} \frac{1}{\sqrt{N}} \ \frac{1}{\sqrt{N'}} \delta_{cc_1}=\frac{1}{\sqrt{N N'}}.$$

Since the vectors $\{\tilde e_c : c \in B  \}$ span the subspace
$$\mathcal{M}=\{ \left( X{(c,c^{\prime})} \right)_{(c,c^{\prime}) \in B \times B^{\prime} } : \textnormal{$X$ does not depend on} \ c^{\prime}   \},$$
the vectors $\{\tilde e_d : d \in \{ 1, . . . , NN'-N \} \}$ are orthogonal to $\mathcal{M}$. Let $P_{\mathcal{M}}$ be the projection onto $\mathcal{M}$.

Note that, for $X \in \mathbb{C}^{N N'},$ we have
$$\left( P_{\mathcal{M}}X  \right) (c_1,c_1^{\prime}) =\sum_{c\in B}\langle X , e_c \rangle e_c(c_1,c_1')=\frac{1}{N'}\sum_{c\in B}\left(\sum_{(c_0,c_0')\in B\times B'}X(c_0,c_0')\delta_{cc_0}\right)\delta_{cc_1}$$
$$=\frac{1}{N'} \sum\limits_{c_0^{\prime} \in B'} X(c_1, c_0^{\prime}).$$
Also, since $P_{\mathcal M}\tilde e_c=\tilde e_c$ for $c\in B$ and $P_{\mathcal M}\tilde e_d=0$ for $d=1,\dots,NN'-N$, we have,
$$
\left( P_{\mathcal{M}} s_{(b,b^{\prime})} \right)(c_1, c_1^{\prime}) = \sum\limits_{c \in B} t_{(b,b'), c} e_c (c_1, c_1^{\prime}) 
=\sum_{c \in B} t_{(b,b'), c}\frac1{\sqrt{N'}}\delta_{cc_1} =\frac{1}{\sqrt{N'}} t_{(b,b'), c_1}.$$
Define now
$$a_{(b,b'),(c,c')} :=\sqrt{NN'}~s_{(b,b')} (c,c').$$
Then we have
$$a_{(0,0), (c,c')}=1 ~ \textnormal{for all}~(c,c').$$
The matrix
$$\frac{1}{\sqrt{N N'}} \left( a_{(b,b'), (c,c')} \right)_{(b,b'), (c,c')}$$
is the matrix with rows ${s}_{(b,b')}.$ So it is unitary.

$$\frac{1}{N^{\prime}} \sum\limits_{c' \in B'} a_{(b,b'), (c,c')} =\frac{1}{N'}\sum_{c'\in B'}\sqrt{NN'}~s_{(b,b')}(c,c')=\sqrt{NN'} \left( P_{\mathcal{M}} {s}_{(b,b')} \right){(c,c')} 
$$$$=\sqrt{NN'} \cdot\frac{1}{\sqrt{N'}} t_{(b,b^{\prime}), c} 
=\alpha_{(b,b^{\prime}), c}.$$
Thus, the conditions (i) and (ii) for the numbers $a_{(b,b'),(c,c')}$ are satisfied.

Next, using the unitary matrix in \eqref{eqp1.1}, we construct some Cuntz isometries $S_{(b,b')}$, $(b,b')\in B\times B'$ in the dilation space $L^2([0,1]\times[0,1])$ and with them we construct an orthonormal set, by applying the Cuntz isometries to the constant function $\mathbf 1$.

Define now the maps $\mathcal{R},\mathcal R': [0,1] \rightarrow [0,1]$ by 
\begin{equation}
\mathcal{R}x={N}x\mod 1,\quad \mathcal{R}'x={N}'x\mod 1,
\end{equation}
and define the maps
\begin{equation}
\Upsilon_{(b,b')}(x,x')=\left( N^{-1}(x+b), N'^{-1}(x'+b')\right)
\end{equation}
for $(x,x^{\prime}) \in \mathbb{R}^d \times \mathbb{R}^{d^{\prime}}$ and $(b,b^{\prime}) \in B \times B^{\prime}.$ 
Define the functions
$$m_{(b,b^{\prime})}(x,x^{\prime}) :=\sum\limits_{(c,c^{\prime}) \in B \times B^{\prime}} a_{(b,b^{\prime}),(c,c^{\prime})} \chi_{\Upsilon_{(c,c^{\prime})}([0,1] \times [0,1])}(x,x^{\prime}),$$
where $\chi_A$ denotes the characteristic function of the set $A$.

With these filters we define the operators $S_{(b,b')}$
on $L^{2}([0,1]\times [0,1])$ by
\begin{equation}
\left( S_{(b,b')} f \right)(x,x')=m_{(b,b')}(x,x') f(\mathcal{R}x,\mathcal{R}x').
\end{equation}

\begin{lemma}
The operators $S_{(b,b')}$, $(b,b')\in B\times B'$ are a representation of the Cuntz algebra $\mathcal O_{NN'}$, i.e., they satisfy the relations in \eqref{eqcuntz}. The adjoint $S_{(b,b')}^*$ is given by the formula
\begin{equation}
(S_{(b,b')}^*f)(x,x')=\frac{1}{NN'}\sum_{(c,c')\in B\times B'}\overline{m_{(b,b')}}(\Upsilon_{(c,c')}(x,x'))f(\Upsilon_{(c,c')}(x,x')),
\label{eqadj}
\end{equation}
for $f\in L^2([0,1] \times[0,1] ), (x,x')\in[0,1]\times [0,1]$.
\end{lemma}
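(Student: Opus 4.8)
The plan is to write each $S_{(b,b')}$ as $S_{(b,b')}=M_{m_{(b,b')}}\,U$, where $M_{m_{(b,b')}}$ is multiplication by the filter $m_{(b,b')}$ and $U$ is the substitution operator $(Uf)(x,x')=f(\mathcal Rx,\mathcal R'x')$ occurring in the definition of $S_{(b,b')}$, and then to reduce the Cuntz relations to the orthonormality of the rows and of the columns of the unitary matrix $\tfrac1{\sqrt{NN'}}\left(a_{(b,b'),(c,c')}\right)$ from \eqref{eqp1.1}. \emph{Adjoint.} Exactly as in the proof of Proposition~\ref{pr1.1}, I compute $\ip{S_{(b,b')}f}{g}$ over $[0,1]\times[0,1]$ and change variables using the inverse branches $\Upsilon_{(c,c')}$ of $\mathcal R\times\mathcal R'$; since each $\Upsilon_{(c,c')}$ has constant Jacobian $\tfrac1{NN'}$ and the cells $\Upsilon_{(c,c')}([0,1]\times[0,1])=[\tfrac cN,\tfrac{c+1}N]\times[\tfrac{c'}{N'},\tfrac{c'+1}{N'}]$ tile the square up to measure zero, I obtain formula \eqref{eqadj}. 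I would record two facts used repeatedly below: $U$ is an isometry, $U^*U=I$ (because $\mathcal R\times\mathcal R'$ is measure-preserving on each branch and $\mathcal R\times\mathcal R'\circ\Upsilon_{(c,c')}=\mathrm{id}$), and $m_{(b,b')}$ is constant, equal to $a_{(b,b'),(c,c')}$, on the cell $\Upsilon_{(c,c')}([0,1]\times[0,1])$.

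\emph{The relation $S_{(b,b')}^*S_{(d,d')}=\delta_{(b,b'),(d,d')}I$.} From $S_{(b,b')}=M_{m_{(b,b')}}U$ we get $S_{(b,b')}^*S_{(d,d')}=U^*M_{\overline{m_{(b,b')}}\,m_{(d,d')}}U$. The product filter is again piecewise constant, with value $\overline{a_{(b,b'),(c,c')}}\,a_{(d,d'),(c,c')}$ on the cell $(c,c')$, so passing it through $U^*M_\bullet U$ (which uses $\mathcal R\times\mathcal R'\circ\Upsilon_{(c,c')}=\mathrm{id}$) collapses the operator to multiplication by the scalar $\tfrac1{NN'}\sum_{(c,c')}\overline{a_{(b,b'),(c,c')}}\,a_{(d,d'),(c,c')}$, which equals $\delta_{(b,b'),(d,d')}$ because the rows of the matrix \eqref{eqp1.1} are orthonormal.

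\emph{The relation $\sum_{(b,b')}S_{(b,b')}S_{(b,b')}^*=I$.} Here $U$ sits in the middle (as $UU^*$), so instead I evaluate $\big(S_{(b,b')}S_{(b,b')}^*f\big)(x,x')=m_{(b,b')}(x,x')\,\big(S_{(b,b')}^*f\big)(\mathcal Rx,\mathcal R'x')$ pointwise. If $(x,x')$ lies in the cell $(c_0,c_0')$, then $m_{(b,b')}(x,x')=a_{(b,b'),(c_0,c_0')}$ and $\Upsilon_{(c_0,c_0')}(\mathcal Rx,\mathcal R'x')=(x,x')$; plugging \eqref{eqadj} in and noting that $\Upsilon_{(c,c')}(\mathcal Rx,\mathcal R'x')$ lands in the cell $(c,c')$, the factor $\big(S_{(b,b')}^*f\big)(\mathcal Rx,\mathcal R'x')$ becomes $\tfrac1{NN'}\sum_{(c,c')}\overline{a_{(b,b'),(c,c')}}\,f\big(\Upsilon_{(c,c')}(\mathcal Rx,\mathcal R'x')\big)$. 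Multiplying by $a_{(b,b'),(c_0,c_0')}$ and summing over $(b,b')$, the inner sum $\sum_{(b,b')}a_{(b,b'),(c_0,c_0')}\overline{a_{(b,b'),(c,c')}}=NN'\,\delta_{(c_0,c_0'),(c,c')}$ (column orthonormality of \eqref{eqp1.1}) leaves only the term $(c,c')=(c_0,c_0')$, which is $f\big(\Upsilon_{(c_0,c_0')}(\mathcal Rx,\mathcal R'x')\big)=f(x,x')$.

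Everything here is routine measure-theoretic bookkeeping, and the only genuine point of care is matching up the two orthonormality relations of \eqref{eqp1.1} — rows for the first Cuntz relation, columns for the second — both of which are available exactly because that matrix was constructed to be unitary rather than merely an isometry. I expect the mildly fiddly step to be the last one, where $U$ no longer cancels algebraically against $U^*$ and one must carry out the pointwise argument cell by cell; the adjoint computation itself is a verbatim two-dimensional repeat of the one-dimensional computation in the proof of Proposition~\ref{pr1.1}.
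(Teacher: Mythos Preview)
Your proposal is correct and follows essentially the same route as the paper: the adjoint is obtained by the same change-of-variables/branch decomposition as in Proposition~\ref{pr1.1}, the relation $S_{(b,b')}^*S_{(d,d')}=\delta_{(b,b'),(d,d')}I$ reduces to row orthonormality of \eqref{eqp1.1}, and the relation $\sum_{(b,b')}S_{(b,b')}S_{(b,b')}^*=I$ is checked pointwise on each cell via column orthonormality. The only cosmetic difference is that you package the first relation through the factorization $S_{(b,b')}=M_{m_{(b,b')}}U$ and the identity $U^*M_\phi U=\bigl(\tfrac1{NN'}\sum_{(c,c')}\phi_{(c,c')}\bigr)I$ for cell-constant $\phi$, whereas the paper simply plugs the adjoint formula \eqref{eqadj} into $S_{(i,i')}^*S_{(j,j')}f$ directly; the underlying computation is identical.
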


\begin{proof}
First, we compute the adjoint, using the invariance equations for the Lebesgue measure under the maps $\Upsilon_{(c,c')}$, i.e.,
$$\int_{[0,1]^2}f(x,x')\,d(x,x')=\frac{1}{NN'}\sum_{(c,c')}\int_{[0,1]^2}f(\Upsilon_{(c,c')}(x,x'))\,d(x,x').$$
We have:
$$\langle S_{(b,b')}f , g \rangle=\int_{[0,1]^2} m_{(b,b')}(x,x')f(\mathcal{R} x,\mathcal{R'} x') \overline{g}(x,x')\,d(x,x')$$$$=\frac{1}{NN'}\sum_{(c,c')}\int_{[0,1]^2} m_{(b,b')}(\Upsilon_{(c,c')}(x,x'))f(x,x') \overline{g}(\Upsilon_{(c,c')}(x,x'))~d(x,x'),$$
and this proves \eqref{eqadj}.

We check the Cuntz relations:

$$S_{(i,i')}^*S_{(j,j')}f(x,x')=\frac{1}{NN'}\sum_{(c,c')}\cj m_{(i,i')}(\Upsilon_{(c,c')}(x,x'))m_{(j,j')}(\Upsilon_{(c,c')}(x,x'))f(\mathcal R\times\mathcal R'(\Upsilon_{(c,c')}(x,x'))$$
$$=\frac{1}{NN'}\sum_{(c,c')}\cj a_{(i,i'),(c,c')} a_{(j,j'),(c,c')} f(x,x')=\delta_{(i,i'),(j,j')} f(x,x'),$$
by \eqref{eqp1.1}. Therefore
$$S_{(i,i'}^*S_{(j,j')}=\delta_{(i,i'),(j,j')}I.$$

Now take $(x,x')\in\Upsilon_{(c_0,c_0')}[0,1]^2$ so $\mathcal Rx=Nx-c_0$, $\mathcal R'x'=N'x'-c_0'$. Then
$$\sum_{(i,i')}S_{(i,i')}S_{(i,i')}^* f(x,x')=\sum_{(i,i')}\frac{1}{NN'}\sum_{(c,c')}\cj m_{(i,i')}(\Upsilon_{(c,c')}(\mathcal Rx,\mathcal R'x'))f(\Upsilon_{(c,c')}(\mathcal Rx,\mathcal R'x'))
$$
$$=\frac{1}{NN'}\sum_{(c,c')}\sum_{(i,i')}a_{(i,i'),(c_0,c_0')}\cj a_{(i,i'),(c,c')}f(\left(x+\frac{c-c_0}{N},x'+\frac{c'-c_0'}{N'}\right)$$
$$=\sum_{(c,c')} \delta_{(c_0,c_0'),(c,c')}f(\left(x+\frac{c_0-c}{N},x'+\frac{c_0'-c'}{N'}\right)= f(x,x').$$
Therefore
$$\sum_{(i,i')}S_{(i,i')}S_{(i,i')}^*=I.$$
\end{proof}

For a word $\omega=(b_1,b_1^{\prime}) . . . (b_k,{b_k}^{\prime})$ we compute
$$(S_{\omega}\mathbf{1})(x,x') =(S_{(b_1,b_1^{\prime})} . . . S_{(b_k,b_k^{\prime})} \mathbf{1})(x,x^{\prime}) 
=S_{(b_1,{b_1}^{\prime})} . . . S_{(b_{k-1},{b_{k-1}^{\prime})}} m_{(b_k,b_k^{\prime})}(x,x^{\prime})$$
$$=S_{(b_1,b_1^{\prime})} \dots S_{(b_{k-2},b_{k-2}^{\prime})} m_{(b_{k-1},b_{k-1}^{\prime})}(x,x^{\prime}) m_{(b_{k},b_k^{\prime})}(\mathcal{R}x,\mathcal{R}^{\prime}x')
=\dots$$$$= m_{(b_{1},b_1^{\prime})}(x,x^{\prime}) m_{(b_{2},b_2^{\prime})}(\mathcal{R}x,\mathcal{R}^{\prime}x^{\prime}) \dots m_{(b_{k-1},b_{k-1}^{\prime})}(\mathcal{R}^{k-1}x,{\mathcal{R}^{\prime}}^{k-1}x^{\prime}) .$$

Next we will need to compute the projection $P_{V}S_{\omega} \mathbf{1},$ onto the subspace $V$ of functions which depend only on the first component, $$V=\{ f(x,y)=g(x) : g \in L^2[0,1] \}.$$ It is easy to see that the projection onto $V$ is given by the formula
$$(P_Vf)(x)=\int_{[0,1]} f(x,x')\,dx',\quad (f\in L^2([0,1]\times[0,1])).$$
Using the invariance equation for the Lebesgue measure under the maps $\tau'_{c'}(x')=(x'+c')/N'$, $c'\in\{0,\dots, N'-1\}$, we have
\begin{equation*}
\begin{split}
\left( P_{V}S_{\omega}\mathbf{1} \right)(x)
&=\int_{[0,1]} m_{(b_{1}, b_{1}')}(x,x')~ \dots  ~ m_{(b_{k},b_k^{\prime})}(\mathcal{R}^{k-1}x,{\mathcal{R}^{\prime}}^{k-1}x^{\prime}) ~ dx' \\
&=\frac{1}{N'} \sum\limits_{c' \in B'} \int_{[0,1]} m_{(b_1,b_1^{\prime})}(x,\tau'_{c'}x^{\prime})~\dots~ m_{(b_{k},b_k^{\prime})}(\mathcal{R}^{k-1}x,{\mathcal{R}^{\prime}}^{k-1}\tau'_{c'}x^{\prime})  dx'.
\end{split}
\end{equation*}
But, by \eqref{eqp1.2},
$$\frac{1}{N'} \sum\limits_{c' \in B'} m_{(b_1,b_1^{\prime})}(x,\tau'_{c'}x')=\frac{1}{N'} \sum\limits_{c' \in B'} a_{(b_1,b_1^{\prime}), (b(x),c^{\prime})}=\alpha_{(b,b'),b(x)},$$
where $b(x)=b$ if $x \in \left[\frac{b}{N} , \frac{b+1}{N} \right).$ 
So
$$\left( P_{V}S_{\omega}\mathbf{1} \right)(x)=\alpha_{(b_1,b_1'), b(x)}\int_{[0,1]}  m_{(b_2,b_2^{\prime})}(\mathcal{R} x,x^{\prime}) \dots m_{(b_k,b_k^{\prime})} (\mathcal{R}^{k-1}x,{\mathcal{R}^{\prime}}^{k-2}x') \  d{\mu'}(x').$$
It now follows by induction that
$$\left( P_{V}S_{\omega}\mathbf{1} \right)(x) = \prod_{j=1}^{k}\alpha_{(b_{j}, b_{j}^{\prime}), b(\mathcal{R}^{j-1} x)}.$$

We can compute that, if $\tilde{\omega}$ is a word over $L=\{ 0, \dots , M-1 \}$, then
\begin{equation}\label{eq2.8}
\begin{split}
\tilde{S}_{\tilde{\omega}}\mathbf{1} (x) &= m_{\tilde{\omega}_1}(x)~m_{\tilde{\omega}_2}(\mathcal{R}x)~ \dots~ m_{\tilde{\omega}_k}(\mathcal{R}^{k-1}x) \\
&=\alpha_{\tilde{\omega}_1  , b(x)}~\alpha_{\tilde{\omega}_2  , b(\mathcal{R}x)} \dots~ \alpha_{\tilde{\omega}_k  , b(\mathcal{R}^{k-1} x)}. 
\end{split}
\end{equation}
So for the word $\omega$ over $B \times B'$, $P_{V} S_{\omega} \mathbf{1} = \tilde{S}_{\tilde{\omega}}\mathbf{1}$, if all the digits $\omega$ are in $\iota(L)$ and $\omega_{j}=\iota(\tilde{\omega}_j)$, and $P_{V} S_{\omega} \mathbf{1}=0$ if at least one of the digits $\omega_j$ is not in $\iota (L)$.

We will prove that 

\begin{equation}
\{ S_{\omega} \mathbf{1} : \omega~\textnormal{is a word over}~B \times B',~\textnormal{not ending in}~(0,0) \}
\label{eqso}
\end{equation}
is an orthonormal basis for $L^2 ([0,1] \times [0,1])$.

It is easy to see that the family is orthonormal: if two words $\omega$ and $\omega'$ differ on the $i$-th position, since the Cuntz isometries $S_{\omega_i}$ and $S_{\omega'_i}$ have orthogonal ranges, it follows that $S_\omega\mathbf 1$ and $S_{\omega'}\mathbf 1$ are orthogonal; if $\omega$ and $\omega'$ do not differ on any position, then one is a prefix of the other, and by completing with zeros at the end and using the fact that $S_{(0,0)}\mathbf 1=\mathbf 1$, one obtains again orthogonality. 

 It remains to prove the completeness. 

Note first that, if 
\begin{equation}\label{define-e}
    e_{(t,t')}(x,x'):=e^{2 \pi i (t , t') \cdot (x , x')},~~((t,t') \in \mathbb{R} \times \mathbb{R},~(x,x') \in [0,1] \times [0,1]),
\end{equation}
then
\begin{equation*}
\begin{split}
\left( S_{(b,b')}^{*} e_{(t,t')} \right)(x,x')&=\frac{1}{N N'} \sum_{(c,c') \in B \times B'} \overline{m}_{(b,b')} (\Upsilon_{(c,c')}(x,x'))~ e_{(t,t')}(\Upsilon_{(c,c')}(x,x')) \\
&=\frac{1}{N N'} 
 \sum_{(c,c') \in B \times B'} \overline{a}_{(b,b'),(c,c')}~e^{2 \pi i \left( \frac{t \cdot (x+c)}{N} + \frac{t' \cdot (x' + c')}{N'} \right)} \\
&=\bigg\{ \frac{1}{N N'} 
 \sum_{(c,c') \in B \times B'} \overline{a}_{(b,b'),(c,c')} e^{2 \pi i \left( \frac{t \cdot c}{N} + \frac{t' \cdot  c'}{N'} \right)} \bigg\}~e_{\left( \frac{t}{N} , \frac{t'}{N'} \right)} (x,x') \\
&=\nu_{(b,b')}(t,t') ~ e_{\left( \frac{t}{N} , \frac{t'}{N'} \right)} (x,x'),
\end{split}
\end{equation*}
where 
\begin{equation}\label{define-nu}
 \nu_{(b,b')}(t,t') =  \frac{1}{N N'} 
 \sum_{(c,c') \in B \times B'} \overline{a}_{(b,b'),(c,c')} e^{2 \pi i \left( \frac{t \cdot c}{N} + \frac{t' \cdot  c'}{N'} \right)}.
\end{equation}

Let $\mathcal{K}$ be the closed span of the family $\{ S_{\omega}\mathbf{1} \}$ in \eqref{eqso}, and let $P_{\mathcal{K}}$ be the orthogonal projection onto $\mathcal{K}$. Let $\Omega$ be the set of words over $B\times B'$ that do not end in zero, including the empty word. Define, for $(t,t') \in \mathbb{R} \times \mathbb{R}$,
\begin{equation}
    h(t,t')=\norm{P_{\mathcal{K}} e_{(t,t')}}^{2}.
\end{equation}
We have
\begin{equation*}
\begin{split}
h(t,t')&=\sum_{\omega \in \Omega} \left| \langle e_{(t,t')} , S_{\omega} \mathbf{1} \rangle \right|^2  \\
&=\sum_{\omega_{1} \in B \times B'} \sum_{\omega \in \Omega} \left| \langle e_{(t,t')} , S_{\omega_{1}} S_{\omega} \mathbf{1} \rangle \right|^2 \\
&=\sum_{\omega_{1} \in B \times B'} \sum_{\omega \in \Omega} \left| \langle S_{\omega_{1}}^{*} e_{(t,t')} , S_{\omega} \mathbf{1} \rangle \right|^2 \\
&=\sum_{\omega_{1} \in B \times B'} \left| \nu_{\omega_1} (t,t') \right|^2 ~ \sum_{\omega \in \Omega} \left|\langle e_{\left( \frac{t}{N}~,~\frac{t'}{N'} \right)} , S_{\omega} \mathbf{1} \rangle \right|^2 \\
&=\sum_{\omega_{1} \in B \times B'} \left| \nu_{\omega_1} (t,t') \right|^2 ~ h\left(\frac{t}{N}~ ,~\frac{t'}{N'} \right).
\end{split}
\end{equation*}
Now, note that
\begin{equation*}
\begin{split}
\sum_{\omega_{1} \in B \times B'} \left| \nu_{\omega_1} (t,t') \right|^2 &=\sum_{(b,b')} \frac{1}{(N N')^2} \sum_{(c, c') \in B \times B'} \sum_{(d, d') \in B \times B'} \overline{a}_{(b, b'), (c,c')} a_{(b, b'), (d,d')}~e^{2 \pi i {(\frac tN,\frac{t'}{N'}) \cdot [(c,c') - (d,d')]}}  \\
&=\frac{1}{(N N')^2} \sum_{(c, c'), (d,d')} e^{2 \pi i {(\frac tN,\frac{t'}{N'}) \cdot [(c,c') - (d,d')]}}~\sum_{(b,b')} \overline{a}_{(b, b'), (c,c')} a_{(b, b'), (d,d')} \\
&=1,
\end{split}
\end{equation*}
which follows from the fact that the matrix $\frac{1}{\sqrt{N N'}}( a_{(b,b'), (c,c')})$ is unitary.

So $h(t,t')=h\left(\frac{t}{N} , \frac{t'}{N'} \right)$. Since $h(t,t')=\norm{P_{\mathcal{K}} e_{(t,t')}}^{2}$, we can easily see that $h$ is continuous on $\mathbb{R}^2$. Also, since $e_{(0,0)}=\mathbf 1 \in \mathcal{K}$, we get that $h(0,0)=1$.

By induction, we have
$$h(t,t')=h\left( \frac{t}{N^{n}} , \frac{t'}{(N')^{n}} \right) \xrightarrow{n \rightarrow \infty} h(0,0)=1.$$
It follows that $h(t,t')$ is the constant $1$, which means that $e_{(t,t')}$ is in $\mathcal K$ for any $(t,t')$. By the {Stone-Weierstrass theorem}, $\mathcal{K}=L^2 \left( [0,1] \times [0,1] \right).$ Thus we have a complete orthonormal basis. Hence $P_{V}S_{\omega}\mathbf{1}$ is a Parseval frame. Eliminating the zeros, according to the statement after \eqref{eq2.8}, we obtain that the functions $\tilde S_{\tilde \omega}\mathbf 1$, with $\tilde\omega\in\Omega_M$ form a Parseval frame for $L^2[0,1]$. 
\end{proof}

\begin{remark}\label{rem2.0}
Our proof shows how the Parseval frame $\{\tilde S_{\tilde \omega}\mathbf 1 : \tilde \omega\in\Omega_M\}$ can be dilated to an orthonormal basis and also how the operators $\tilde S_l$, $l\in\{0,\dots,M-1\}$ can be dilated to a representation of the Cuntz algebra as in Theorem \ref{thj}. We describe here, more precisely, what we mean by this.

Note first that we changed the index set $L=\{0,\dots,M-1\}$ to the index set $B\times B'=\{0,\dots,N-1\}\times\{0,\dots,N'-1\}$ and we embedded $L$ into $B\times B'$ by the map $\iota$, with $\iota(0,0)=0$. We also defined $\alpha_{(b,b'),c}=\alpha_{l,c}$ if $(b,b')=\iota(l)$, $c\in B$, and $\alpha_{(b,b'),c}=0$ otherwise. 

The operators $(S_{(b,b')})_{(b,b')\in B\times B'}$ form a representation of the Cuntz algebra $\mathcal O_{NN'}$ on the Hilbert space  $L^2([0,1]\times[0,1])$. 

Let 
$$V=\{f\in L^2([0,1]\times[0,1]) : f(x,y)=g(x),g \in L^2[0,1] \},$$
which can be identified with $L^2[0,1]$. Define the operators $\tilde S_{(b,b')}$ on $L^2[0,1]$, for $(b,b')\in B\times B'$,
$$\tilde S_{(b,b')}=\left\{\begin{array}{cc}\tilde S_l,&\mbox{ if }(b,b')=\iota(l),\\
0,&\mbox{ otherwise.}\end{array}\right.$$

We prove that 
\begin{equation}
S_{(b,b')}^*P_V=\tilde S_{(b,b')}^*,\quad((b,b')\in B\times B').
\label{eqr2.1.1}
\end{equation}
Using the relation before \eqref{define-nu} and the relation \eqref{eqp1.2}, we see that 
$$\left(S_{(b,b')}^*e_{(t,0)}\right)(x,x')=\frac{1}{NN'}\sum_{(c,c')\in B\times B'}\cj a_{(b,b'),(c,c')} e^{2\pi i \frac{t\cdot c}{N}}=\frac{1}{N}\sum_{c\in B}\cj\alpha_{(b,b'),c}e^{2\pi i\frac{t\cdot c}{N}}=\left(\tilde S_{(b,b')}^*e_t\right) (x).$$
The last equality follows from a similar computation to the one just before \eqref{define-nu}, when $(b,b')\in \iota (L)$, and, if $(b,b')\not\in\iota(L)$, $\alpha_{(b,b'),c}=0$ for all $c\in B$. 

Since the functions $e_{(t,0)}$, $t\in \br$ are dense in $V$, we obtain \eqref{eqr2.1.1}. Also, since the vector $\mathbf 1$ is cyclic for the representation (it generates an orthonormal basis for $L^2([0,1]\times [0,1])$ as we have seen before ), we get that $V$ is also cyclic for this representation. 

Thus, by Theorem \ref{main-thm}, the operators $(S_{(b,b')})$ form a representation of the Cuntz algebra $\mathcal O_{NN'}$ which is the dilation of the operators $(\tilde S_{(b,b')})$ which satisfy the relation 
$$\sum_{(b,b')\in B\times B'}\tilde S_{(b,b')}\tilde S_{(b,b')}^*=I_{L^2[0,1]}.$$

The advantage of enlarging the index set from $L$ to $B\times B'$ is that the dilation has a nice structure of a Cartesian product. The disadvantage is that when we project back to the original space we get some extra zeros. 

If we want to avoid these zeros then we can consider the subspace $\tilde K$ which is spanned by the vectors 
$$\{ S_{(b_1,b_1')}\dots S_{(b_k,b_k')}\mathbf 1 : (b_1,b_1')\dots (b_k,b_k')\in \iota (L), k\in\bn\}.$$
It is easy to see that the space $\tilde K$ is invariant for $S_{\iota (l)}$ and $S_{\iota(l)}^*$, $l\in L$, the relation \eqref{eqr2.1.1} is preserved, when restricted to $\tilde K$, and therefore $\tilde K$, with the restrictions of the operators $S_{\iota(l)}$ to it, are exactly the dilation of the operators $\tilde S_l$ as in Theorem \ref{thj}.
\end{remark}

\begin{proposition}\label{prop-iterate-parseval-orthogonal}
Let $A$ be the matrix $\left( \alpha_{l,b} \right)_{l=0,\dots, M-1 , b =0,\dots,N-1}$ and $\overrightarrow{\alpha}_{l}=(\alpha_{l,0},\alpha_{l,1},\dots,\alpha_{l,N-1})$ be the $l^{th}$ row of $A$. 
\begin{enumerate}
    \item Let $l_{0}, l_{1} , \dots , l_{k-1} \in \{0,\dots,M-1\}$ and let $b_{0}, b_{1} , \dots , b_{k-1} \in \{0,\dots, N-1\}$. Define $l:=l_{0}+M l_{1} + \dots + M^{k-1} l_{k-1}$ and $b:=b_{k-1}+N b_{k-2} + \dots + N^{k-1} b_{0}$. Then, for $x \in \left[ \frac{b}{N^{k}} , \frac{b+1}{N^{k}} \right]$, 
\begin{equation}
\begin{split}
\left(\tilde S_{l_{0}}\tilde S_{l_{1}} \dots \tilde S_{l_{k-1}} \mathbf{1} \right)(x) &=\alpha_{l_{0}, b_{0}} \alpha_{l_{1}, b_{1}} \dots \alpha_{l_{k-1}, b_{k-1}} \\
&=(A^{\otimes k})_{l, b}.
\end{split}
\end{equation}
Here $A^{\otimes k}$ is the tensor product $A\otimes A\otimes \dots\otimes A$, $k$ times. 

\item The $l^{th}$ row of $\frac{1}{\sqrt{N^{k}}} A^{\otimes k}$ is $\frac{1}{\sqrt{N^{k}}} (\overrightarrow{\alpha}_{l_{0}} \otimes \overrightarrow{\alpha}_{l_{1}} \otimes \dots \otimes \overrightarrow{\alpha}_{l_{k-1}})$ 
and these rows form a Parseval frame for $\mathbb{C}^{N^{k}}$.
The family
$$\{ \tilde{S}_{\omega}\mathbf{1} : \omega~\textnormal{is a word over}~\{0,\dots,M-1\}~\textnormal{of length}~\leq k~\textnormal{not ending in}~0  \}$$
coincides with the family
$$\{ \tilde{S}_{\omega}\mathbf{1} : \omega~\textnormal{is a word of length}~=k \}$$
and it forms a Parseval frame for the subspace $\mathcal{F}_{k}$ of $L^2$-functions which are constant on every interval of the form $\left[ \frac{b}{N^k} , \frac{b+1}{N^k} \right]$, for $b \in \{ 0, \dots , N^{k-1} \}$.

\item If $\tilde{\omega} \in \Omega_M=\{ \textnormal{words not ending in}~0 \}$, and $\textnormal{length}(\tilde{\omega})=:|\tilde{\omega}| \geq k+1$, then $\tilde{S}_{\tilde{\omega}}\mathbf{1}$ is orthogonal to the subspace $\mathcal{F}_{k}$.
\end{enumerate}
\end{proposition}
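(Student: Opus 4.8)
The plan is to establish the three parts in order, since each builds on the previous one. For part (i), I would start from formula \eqref{eq2.8}, which already gives $\tilde S_{\tilde\omega}\mathbf 1(x)=\alpha_{\tilde\omega_1,b(x)}\,\alpha_{\tilde\omega_2,b(\mathcal Rx)}\cdots$. The only work is bookkeeping: for $x\in[\tfrac{b}{N^k},\tfrac{b+1}{N^k}]$ with $b=b_{k-1}+Nb_{k-2}+\dots+N^{k-1}b_0$, one checks that $b(\mathcal R^{j}x)=b_j$ by writing out the base-$N$ expansion and noting that applying $\mathcal Rx=Nx\bmod 1$ shifts the digits. The identification of the product $\prod_{j=0}^{k-1}\alpha_{l_j,b_j}$ with the $(l,b)$-entry of $A^{\otimes k}$ is just the standard formula for tensor-product matrix entries with the stated mixed-radix indexing of $l$ and $b$ (note the index orders are reversed, which matches the convention in the statement).

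For part (ii), the row-of-tensor-product claim is immediate from part (i) read off columnwise. That the rows of $\tfrac{1}{\sqrt{N^k}}A^{\otimes k}$ form a Parseval frame for $\bc^{N^k}$ follows because $\tfrac1{\sqrt N}A=T$ has orthonormal columns (condition \eqref{matrix_T}), hence $\tfrac{1}{\sqrt{N^k}}A^{\otimes k}=T^{\otimes k}$ has orthonormal columns, and having orthonormal columns is equivalent to the rows forming a Parseval frame (\cite[Lemma 3.8]{HKLW07}, already cited). The two descriptions of the function family coincide because $\tilde S_0\mathbf 1=\mathbf 1$: any word of length $<k$ not ending in $0$ can be padded on the right with $0$'s to length exactly $k$ without changing the function, and conversely truncating trailing $0$'s from a length-$k$ word gives a word not ending in $0$; this is a bijection. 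Finally, to see this family is a Parseval frame for $\mathcal F_k$: a function in $\mathcal F_k$ is determined by its $N^k$ values on the dyadic-type intervals, so $\mathcal F_k\cong\bc^{N^k}$ via $f\mapsto(f|_{[b/N^k,(b+1)/N^k]})_b$, and under this identification the family $\{\tilde S_\omega\mathbf 1:|\omega|=k\}$ is exactly the rescaled rows of $T^{\otimes k}$ — one has to track the $L^2$-normalization, where the factor $\tfrac{1}{\sqrt{N^k}}$ comes from the interval lengths $N^{-k}$, so the $L^2$ Parseval identity matches the $\bc^{N^k}$ Parseval identity for the rows.

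For part (iii), I would argue as follows. By part (ii), $\mathcal F_k$ has orthonormal basis coming from $T^{\otimes k}$ applied suitably, or more directly: $\mathcal F_k$ is spanned by the indicator functions $\chi_{[b/N^k,(b+1)/N^k]}$, and equivalently by $\{\tilde S_\sigma\mathbf 1:|\sigma|=k\}$. So it suffices to show $\langle \tilde S_{\tilde\omega}\mathbf 1,\tilde S_\sigma\mathbf 1\rangle=0$ whenever $|\tilde\omega|\geq k+1$ and $|\sigma|=k$. The clean way is to lift to the orthonormal setting of the main proof: recall $\tilde S_l^*=S_{\iota(l)}^*P_V$ on the nose (Remark \ref{rem2.0}), but more usefully, use the relation \eqref{eq1.1.1} showing $\tilde S_l^*g(x)=\tfrac1N\sum_b\cj\alpha_{l,b}\,g(\tfrac{x+b}{N})$, so $\tilde S_l^*$ maps $\mathcal F_k$ into $\mathcal F_{k-1}$. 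Writing $\tilde\omega=\tilde\omega_1\tilde\omega'$ with $|\tilde\omega'|\geq k$, we get $\langle\tilde S_{\tilde\omega}\mathbf 1,\tilde S_\sigma\mathbf 1\rangle=\langle\tilde S_{\tilde\omega'}\mathbf 1,\tilde S_{\tilde\omega_1}^*\tilde S_\sigma\mathbf 1\rangle$; iterating, after $k$ steps $\tilde S_{\tilde\omega_1}^*\cdots\tilde S_{\tilde\omega_k}^*\tilde S_\sigma\mathbf 1\in\mathcal F_0=\bc\mathbf 1$, so it is a constant multiple of $\mathbf 1$, say $c\,\mathbf 1$, and the inner product becomes $c\langle\tilde S_{\tilde\omega''}\mathbf 1,\mathbf 1\rangle$ where $\tilde\omega''$ is the (nonempty, since $|\tilde\omega|\ge k+1$) tail of $\tilde\omega$. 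But $\langle\tilde S_{\mu}\mathbf 1,\mathbf 1\rangle=0$ for any nonempty word $\mu$ not ending in $0$ — indeed $\tilde S_0^*\mathbf 1=\mathbf 1$ but for the last letter $\mu_j\neq 0$ one has $\tilde S_{\mu_j}^*\mathbf 1=\tfrac1N\sum_b\cj\alpha_{\mu_j,b}\,\mathbf 1=\cj{\langle\ora\alpha_{\mu_j},\ora\alpha_0\rangle}/N\cdot\mathbf 1=0$ by orthonormality of the columns of $T$ together with $\ora\alpha_0=(1,\dots,1)$ being $\sqrt N$ times the first column — wait, one must instead use column-orthogonality; I would phrase it as: $\tfrac1N\sum_b\cj\alpha_{\mu_j,b}=\overline{(T^*T)_{?}}$, more carefully, $\sum_b\alpha_{\mu_j,b}\cj\alpha_{0,b}=N(T^*T)$ restricted appropriately equals $0$ unless $\mu_j=0$ — I would work this out cleanly from \eqref{matrix_T} and \eqref{alpha-zero}. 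The main obstacle is precisely this last orthogonality bookkeeping and making sure the "$\tilde S_l^*$ lowers $\mathcal F_k$ to $\mathcal F_{k-1}$" claim is stated and justified in the right form; everything else is routine.
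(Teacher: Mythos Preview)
Parts (i) and (ii) of your plan match the paper's proof essentially verbatim: both derive (i) from \eqref{eq2.8} plus bookkeeping of base-$N$ digits and tensor indices, and both obtain (ii) from $T^*T=I_N\Rightarrow (T^{\otimes k})^*(T^{\otimes k})=I_{N^k}$, transferred to $\mathcal F_k$ via the natural isometry $\bc^{N^k}\cong\mathcal F_k$, with the coincidence of the two families coming from $\tilde S_0\mathbf 1=\mathbf 1$.

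Part (iii) is where you diverge. The paper's argument is a two-line application of the main result: since by Theorem~\ref{main-thm} the full family $\{\tilde S_\omega\mathbf 1:\omega\in\Omega_M\}$ is Parseval for $L^2[0,1]$, and by (ii) the subfamily with $|\omega|\le k$ is already Parseval for $\mathcal F_k$, subtracting the two Parseval identities for $f\in\mathcal F_k$ forces $\sum_{|\omega|\ge k+1}|\langle f,\tilde S_\omega\mathbf 1\rangle|^2=0$. Your route --- $\tilde S_l^*$ maps $\mathcal F_k$ into $\mathcal F_{k-1}$, then iterate to reduce to $\langle\tilde S_\mu\mathbf 1,\mathbf 1\rangle=0$ for nonempty $\mu$ not ending in $0$ --- is more hands-on and has the virtue of not invoking Theorem~\ref{main-thm} (natural in view of Remark~\ref{rem2.1}, which rebuilds the main theorem from (ii) alone).

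However, the key step you flag as needing to be ``worked out cleanly'' is genuinely mis-identified in your sketch. The quantity $\tfrac1N\sum_b\cj\alpha_{l,b}=\tfrac1N\sum_b\cj\alpha_{l,b}\alpha_{0,b}$ is a \emph{row} pairing, namely $\cj{\langle\ora T_l,\ora T_0\rangle}$; it is an entry of $TT^*$, not of $T^*T$, so neither ``orthonormality of the columns'' nor \eqref{matrix_T} applied directly yields it. The rows merely form a Parseval frame and are in general not mutually orthogonal. The correct argument is the one appearing later as Proposition~\ref{pr2.5}(i)$\Rightarrow$(ii): since $\|\ora T_0\|=1$ and the rows form a Parseval frame, the Parseval identity applied to $\ora T_0$ gives $1=\sum_l|\langle\ora T_0,\ora T_l\rangle|^2=1+\sum_{l\ge1}|\langle\ora T_0,\ora T_l\rangle|^2$, whence $\ora T_l\perp\ora T_0$ for $l\ne0$. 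With this in hand your direct proof of (iii) goes through.
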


\medskip
\noindent
\begin{proof}
(i)
This follows from \eqref{eq2.8}, so , if $x\in\left[\frac{b_1+Nb_0}{N^2},\frac{b_1+Nb_0+1}{N^2}\right]$, then 
$$\tilde S_{l_0}\tilde S_{l_1}\mathbf1 (x)=\alpha_{l_0,b(x)}\alpha_{l_1,b(\mathcal Rx)}=\alpha_{l_0,b_0}\alpha_{l_1,b_1}.$$
 and the fact that 
$$(A \otimes A)_{l_{0}+M l_{1},~ b_{0}+M b_{1}}=\alpha_{l_{0}, b_{0}} \alpha_{l_{1}, b_{1}},$$
and
$$\left( \overrightarrow{\alpha}_{l_{0}} \otimes \overrightarrow{\alpha}_{l_{1}} \right) (b_{0} + N b_{1})=\overrightarrow{\alpha}_{l_{0}}(b_{0}) \overrightarrow{\alpha}_{l_{1}}(b_1).$$

(ii) Since $\frac{1}{\sqrt{N}}A$ is an isometry, i.e., $\frac{1}{N}A^{*}A=I_{{N}}$, then
\begin{equation*}
    \begin{split}
        \frac{1}{N^{k}} (A^{\otimes k})^{*}(A^{\otimes k}) &= \frac{1}{N^{k}} \underbrace{(A^{*}A \otimes \dots \otimes A^{*}A)}_\text{$k$ terms} \\
        &=\underbrace{I_{\mathbb{C}^{N}} \otimes \dots \otimes I_{\mathbb{C}^{N}}}_\text{$k$ terms} \\
        &=I_{\mathbb{C}^{N^{k}}}.
    \end{split}
\end{equation*}
So the rows of $\frac{1}{N^{k}} A^{\otimes k}$ form a Parseval frame for $\mathbb{C}^{N^{k}}$. The subspace $\mathcal{F}_{k}$ is isometric to $\mathbb{C}^{N^{k}}$ by the map $\psi_{k} : \mathbb{C}^{N^{k}} \rightarrow \mathcal{F}_{k}$, which is defined by
$$\psi_{k}(V_{0} , \dots , V_{N^{k}-1}) :=\sqrt{N^{k}} \sum_{b=0}^{N^{k}-1} V_{b}~ \chi_{\left[ \frac{b}{N^k} , \frac{b+1}{N^k}\right]},$$
and 
$$\psi_{k}\left( \frac{1}{\sqrt{N^{k}}} (\overrightarrow{\alpha}_{l_{0}} \otimes \overrightarrow{\alpha}_{l_{1}} \otimes \dots \otimes \overrightarrow{\alpha}_{l_{k-1}})\right)=\tilde{S}_{l_{0}} \dots \tilde{S}_{l_{k-1}} \mathbf{1}.$$
Therefore, the family 
$$\{ \tilde{S}_{\omega}\mathbf{1} : \omega~\textnormal{is a word over}~\{0,\dots,M-1\}~\textnormal{of length}~ k  \}$$
forms a Parseval frame for $\mathcal F_k$. 

Since $\tilde S_0\mathbf 1=\mathbf 1$, we can see that this family coincides with 
$$\{ \tilde{S}_{\omega}\mathbf{1} : \omega~\textnormal{is a word over}~\{0,\dots,M-1\}~\textnormal{of length}~\leq k~\textnormal{not ending in}~0  \}.$$

(iii) With $\tilde{\omega}$ given as before (so $|\tilde{\omega}| \geq k+1$), we have for $f \in \mathcal{F}_{k}$, by $(ii)$ above, that
\begin{equation}
    \norm{f}^{2} = \sum_{\substack{\omega \in \Omega_M, \\ |\omega| \leq k }} \left| \langle f , \tilde S_{\omega}\mathbf{1} \rangle \right|^2.
\end{equation}
On the other hand, since  $\{ \tilde{S}_{\omega}\mathbf{1} : \omega \in \Omega_M  \}$ is a Parseval frame, it follows that
$$
\norm{f}^{2} = \sum_{\omega \in \Omega_M}    \left| \langle f , \tilde S_{\omega}\mathbf{1} \rangle \right|^2 = \sum_{\substack{\omega \in \Omega_M, \\ |\omega| \leq k  }}    \left| \langle f , \tilde S_{\omega}\mathbf{1} \rangle \right|^2 + \sum_{\substack{\omega \in \Omega_M, \\ |\omega| \geq k+1  }}    \left| \langle f , \tilde{S}_{\omega}\mathbf{1} \rangle \right|^2.
$$
From this, we get
$$\sum_{\substack{\omega \in \Omega_M, \\ |\omega| \geq k+1  }}    \left| \langle f , \tilde{S}_{\omega}\mathbf{1} \rangle \right|^2 = 0.$$
So $\tilde{S}_{\tilde{\omega}}\mathbf{1}$ is orthogonal to $f$ if $|\tilde{\omega}| \geq k+1$.

\end{proof}

\begin{remark}\label{rem2.1}

Now we shall provide an alternative proof for Theorem \ref{main-thm}, using Part (ii) in the Proposition \ref{prop-iterate-parseval-orthogonal} above (which we note that it does not require the proof of Theorem \ref{main-thm}).

By (ii) in Proposition \ref{prop-iterate-parseval-orthogonal}, if $|\tilde{\omega}| = m \geq k+1$, we have that $\{ \tilde{S}_{\omega}\mathbf{1} : \omega \in \Omega_M, |\omega| \leq m \}$ is a Parseval frame for $\mathcal{F}_{m}$. So it follows that
$$
\norm{f}^{2} = \sum_{\substack{\omega \in \Omega_M, \\ |\omega| \leq m  }}    \left| \langle f , \tilde S_{\omega}\mathbf{1} \rangle \right|^2 = \sum_{\substack{\omega \in \Omega_M, \\ |\omega| \leq k  }}    \left| \langle f , \tilde S_{\omega}\mathbf{1} \rangle \right|^2 + \sum_{\substack{\omega \in \Omega_M, \\ m \geq |\omega| \geq k+1  }}    \left| \langle f , \tilde{S}_{\omega}\mathbf{1} \rangle \right|^2.
$$
From this, we get
$$\sum_{\substack{\omega \in \Omega_M, \\ m \geq |\omega| \geq k+1  }}    \left| \langle f , \tilde{S}_{\omega}\mathbf{1} \rangle \right|^2 = 0.$$
So $\tilde{S}_{\tilde{\omega}}\mathbf{1}$ is orthogonal to $f$ if $|\tilde{\omega}| \geq k+1$.

Now take $f\in\mathcal F_k$. We have 
$$\sum_{\tilde\omega\in\Omega_M}|\ip{f}{\tilde S_{\tilde\omega}\mathbf 1}|^2=\sum_{\substack{\tilde\omega\in\Omega_M,}\\{|\omega|\leq k}}|\ip{f}{\tilde S_{\tilde\omega}\mathbf 1}|^2
+\sum_{\substack{\tilde\omega\in\Omega_M,}\\{|\omega|\geq k+1}}|\ip{f}{\tilde S_{\tilde\omega}\mathbf 1}|^2=\|f\|^2+0.$$
But the union of the spaces $\mathcal F_k$, $k\in\bn$ is dense in $L^2[0,1]$, and therefore $\tilde S_{\tilde\omega}\mathbf 1$, $\tilde \omega\in\Omega_M$ do form a Parseval frame for $L^2[0,1]$.

\end{remark}

In the following, we present a way to construct examples of matrices $T$ satisfying \eqref{matrix_T} and \eqref{alpha-zero}. As we mentioned in the introduction, this equivalent to the fact that the rows $\overrightarrow T_l=\frac{1}{\sqrt N}(\alpha_{l,0},\dots,\alpha_{l,N-1})$, $l=0,\dots, M-1$ form a Parseval frame for $\bc^N$ and $\overrightarrow T_0(i)=\frac{1}{\sqrt N}$ for all $i=0,\dots N-1$. 

Let $\langle\ora T_0 \rangle$ be the subspace spanned by $\ora T_0$ and $\langle \ora T_0\rangle^\perp$ be its orthogonal complement in $\bc^N$. 
\begin{proposition}\label{pr2.5}
Let $\ora T_l$, $l=0,\dots,M-1$ be a set of vectors in $\bc^N$ with $\ora T_0(i)=\frac{1}{\sqrt N}$ for all $i=0,\dots,N-1$. The following affirmations are equivalent:
\begin{enumerate}
	\item The vectors $\ora T_l$, $l=0,\dots, M-1$ form a Parseval frame for $\bc^N$. 
	\item $\ora T_l\perp \ora T_0$ for all $l=1,\dots, M-1$ and the vectors $\ora T_l$, $l=1,\dots, M-1$, form a Parseval frame for $\langle \ora T_0\rangle^\perp$. 
\end{enumerate}

\end{proposition}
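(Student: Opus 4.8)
The plan is to exploit the standard fact, already recalled in the excerpt just before the statement of the proposition, that a family of vectors $\{v_l\}$ in a finite-dimensional Hilbert space $H$ is a Parseval frame for $H$ if and only if the synthesis operator $\Theta^*\colon \ell^2 \to H$, $\Theta^*(\xi) = \sum_l \xi_l v_l$, satisfies $\Theta^*\Theta = I_H$, equivalently the analysis operator $\Theta\colon H \to \ell^2$, $\Theta h = (\ip{h}{v_l})_l$, is an isometry, equivalently $\Theta\Theta^*$ is the orthogonal projection of $\ell^2$ onto the range of $\Theta$. I would actually find it cleanest to use the resolution-of-identity characterization: $\{v_l\}_{l=0}^{M-1}$ is a Parseval frame for $H$ iff $\sum_{l=0}^{M-1} \ip{h}{v_l} v_l = h$ for all $h \in H$, i.e. $\sum_l v_l v_l^* = I_H$ as operators on $H$.

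The core observation is that $\ora T_0 = \frac{1}{\sqrt N}(1,\dots,1)$ has norm $1$, so it is a unit vector spanning the one-dimensional space $\langle \ora T_0\rangle$, and $\bc^N = \langle \ora T_0\rangle \oplus \langle \ora T_0\rangle^\perp$ orthogonally. I would then argue as follows.

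\medskip\noindent\textbf{(i) $\Rightarrow$ (ii).} Assume $\{\ora T_l\}_{l=0}^{M-1}$ is a Parseval frame for $\bc^N$. Apply the Parseval identity to the unit vector $\ora T_0$ itself: $1 = \|\ora T_0\|^2 = \sum_{l=0}^{M-1} |\ip{\ora T_0}{\ora T_l}|^2 = |\ip{\ora T_0}{\ora T_0}|^2 + \sum_{l=1}^{M-1}|\ip{\ora T_0}{\ora T_l}|^2 = 1 + \sum_{l=1}^{M-1}|\ip{\ora T_0}{\ora T_l}|^2$. Hence $\sum_{l=1}^{M-1}|\ip{\ora T_0}{\ora T_l}|^2 = 0$, so $\ora T_l \perp \ora T_0$ for every $l=1,\dots,M-1$; this already says each $\ora T_l$ ($l\ge 1$) lies in $\langle \ora T_0\rangle^\perp$. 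Now take an arbitrary $h \in \langle \ora T_0\rangle^\perp$. Then $\ip{h}{\ora T_0}=0$, so the Parseval identity for $h$ reads $\|h\|^2 = \sum_{l=1}^{M-1}|\ip{h}{\ora T_l}|^2$, which is exactly the Parseval-frame condition for $\{\ora T_l\}_{l=1}^{M-1}$ inside the Hilbert space $\langle \ora T_0\rangle^\perp$. (Alternatively, at the operator level: $\sum_{l=0}^{M-1}\ora T_l\, \ora T_l^* = I_{\bc^N}$; restricting both sides to $\langle \ora T_0\rangle^\perp$ and using $\ora T_0\,\ora T_0^* = P_{\langle \ora T_0\rangle}$ gives $\sum_{l=1}^{M-1}\ora T_l\,\ora T_l^* = I_{\bc^N} - P_{\langle \ora T_0\rangle} = P_{\langle \ora T_0\rangle^\perp}$.)

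\medskip\noindent\textbf{(ii) $\Rightarrow$ (i).} Assume $\ora T_l \perp \ora T_0$ for $l\ge 1$ and $\{\ora T_l\}_{l=1}^{M-1}$ is a Parseval frame for $\langle \ora T_0\rangle^\perp$. Fix $h \in \bc^N$ and split $h = c\,\ora T_0 + h'$ with $c = \ip{h}{\ora T_0}$ and $h' = h - c\,\ora T_0 \in \langle \ora T_0\rangle^\perp$; by orthogonality $\|h\|^2 = |c|^2 + \|h'\|^2$. For $l\ge 1$ we have $\ip{h}{\ora T_l} = c\,\ip{\ora T_0}{\ora T_l} + \ip{h'}{\ora T_l} = \ip{h'}{\ora T_l}$ since $\ora T_l \perp \ora T_0$. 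Also $\ip{h}{\ora T_0} = c$. Therefore $\sum_{l=0}^{M-1}|\ip{h}{\ora T_l}|^2 = |c|^2 + \sum_{l=1}^{M-1}|\ip{h'}{\ora T_l}|^2 = |c|^2 + \|h'\|^2 = \|h\|^2$, using the Parseval-frame hypothesis for $\{\ora T_l\}_{l\ge 1}$ applied to $h' \in \langle \ora T_0\rangle^\perp$. This is the Parseval identity for $\{\ora T_l\}_{l=0}^{M-1}$ in $\bc^N$, completing the proof.

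\medskip There is no real obstacle here; the whole content is the orthogonal decomposition $\bc^N = \langle\ora T_0\rangle\oplus\langle\ora T_0\rangle^\perp$ together with the fact that $\ora T_0$ is already a unit vector (so it alone is a Parseval frame for the line it spans). The only point requiring a moment's care is extracting $\ora T_l\perp\ora T_0$ in the direction (i) $\Rightarrow$ (ii) — this comes for free by testing the Parseval identity on $\ora T_0$ and using $\|\ora T_0\|=1$, and it is what makes the two orthogonal pieces decouple.
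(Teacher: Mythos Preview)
Your proof is correct and follows essentially the same approach as the paper's: both note $\|\ora T_0\|=1$, test the Parseval identity on $\ora T_0$ to extract the orthogonality $\ora T_l\perp\ora T_0$ for $l\ge1$, and then use the orthogonal decomposition $\bc^N=\langle\ora T_0\rangle\oplus\langle\ora T_0\rangle^\perp$ to pass between the two conditions. The only difference is cosmetic---you include an operator-level aside ($\sum_l \ora T_l\ora T_l^*=I$) that the paper omits.
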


\begin{proof}
Note first that $\|\ora T_0\|=1$. 

(i)$\Rightarrow$(ii). Since the vectors $\ora T_l$, $l=0,\dots, M-1$ form a Parseval frame, we have 
$$1=\|\ora T_0\|^2=|\ip{\ora T_0}{\ora T_0}|^2+\sum_{l=1}^{M-1}|\ip{\ora T_0}{\ora T_l}|^2=1+\sum_{l=1}^{M-1}|\ip{\ora T_0}{\ora T_l}|^2.$$
This implies that $\ora T_l\perp \ora T_0$ for all $l=1,\dots, M-1$.

Take now a vector $f$ in $\langle \ora T_0\rangle^\perp$. We have 
$$\|f\|^2=|\ip{f}{\ora T_0}|^2+\sum_{l=1}^{M-1}|\ip{f}{\ora T_l}|^2=\sum_{l=1}^{M-1}|\ip{f}{\ora T_l}|^2,$$
so $\ora T_l$, $l=1,\dots, M-1$ form a Parseval frame for $\langle \ora T_0\rangle^\perp$.

(ii)$\Rightarrow$(i). Let $f$ be a vector in $\bc^N$. We can decompose $f$ as $f=\ip{f}{\ora T_0}\ora T_0+f_1$, with $f_1\in \langle \ora T_0\rangle^\perp$. We have 
$$\|f\|^2=|\ip{f}{\ora T_0}|^2+\|f_1\|^2=|\ip{f}{\ora T_0}|^2+\sum_{l=1}^{M-1}|\ip{f_1}{\ora T_l}|^2$$$$=|\ip{f}{\ora T_0}|^2+\sum_{l=1}^{M-1}|\ip{f}{\ora T_l}|^2
=\sum_{l=0}^{M-1}|\ip{f}{\ora T_l}|^2.$$
\end{proof}

With Proposition \ref{pr2.5}, we see that, to construct matrices $T$ as in \eqref{matrix_T} and \eqref{alpha-zero}, we just have to construct a Parseval frame for $\langle \ora T_0\rangle^\perp$. This can be done by picking an isometry $\Psi$ from $\bc^{N-1}$ to $\langle \ora T_0\rangle^\perp$, a Parseval frame $e_l$, $l=1,\dots, M-1$ for $\bc^{N-1}$, and letting $\ora T_l=\Psi(e_l)$, $l=1,\dots, M-1$. 

\medskip
 \noindent {\it Acknowledgments}.  This work was partially supported by a grant from the Simons Foundation (\#228539 to Dorin Dutkay) 

\bibliographystyle{alpha}
\bibliography{eframes}


\end{document}